\newcommand{\itemb}{\item[$\bullet$]}
\newcommand{\dx}{\, {\mathrm d}}
\newcommand{\Dx}{\, {\mathrm d}}
\newcommand{\E}{\mathbb{E}}
\newcommand{\R}{\mathbb{R}}
\newcommand{\N}{\mathbb{N}}
\renewcommand{\P}{\mathbb{P}}
\newtheorem{hypo}{Hypothesis}
\definecolor{bl}{rgb}{0.0,0.2,0.6}
\begin{document}

\title{A stochastic model of cell adhesion to the vascular wall
}
%



\author{Christ\`{e}le Etchegaray         \and
        Nicolas Meunier 
}


\institute{C. Etchegaray \at
              INRIA Monc, Institut de Math\'ematiques de Bordeaux, 351, cours de la Libération,
 33 405 TALENCE, France.\\
              \email{christele.etchegaray@inria.fr}           
           \and
           N. Meunier \at
              LaMME, CNRS UMR 8071, Universit\'{e} \'Evry Val d'Essonne, 23 boulevard de France
91 037 \'{E}vry Cedex,
France. \\
\email{nicolas.meunier@univ-evry.fr} 
}

\date{Received: date / Accepted: date}

\maketitle

\begin{abstract}

This paper deals with the adhesive interaction arising between a cell circulating in the blood flow and the vascular wall. The purpose of this work is to investigate the effect of the blood flow velocity on the cell dynamics, and in particular on its possible adhesion to the vascular wall. 
We formulate a model that takes into account the stochastic variability of the formation of bonds, and the influence of the cell velocity on the binding dynamics: the faster the cell goes, the more likely existing bonds are to disassemble. The model is based on a nonlinear birth-and-death-like dynamics, in the spirit of \cite{joffe_weak_1986,ethier2009markov}.
We prove that, under different scaling regimes, the cell velocity follows either an ordinary differential equation or a stochastic differential equation, that we both analyse. We obtain both the identification of a shear-velocity threshold associated with the transition from cell sliding and its firm adhesion, and the expression of the cell mean stopping time as a function of its adhesive dynamics.

\keywords{Cell adhesion \and Metastatic development \and Immune response \and Atherosclerosis \and Stochastic process}
\end{abstract}

\indent \begin{tabular}{rlr}
1.& Introduction  & (\pageref{intro}) \\[0.cm]
2. & A Markovian jump process for the cell adhesion dynamics & (\pageref{sto})\\[0.cm]
3. & Continuous limiting models and characterization of the dynamics & (\pageref{cont})\\[0.cm]
4. & Discussion & (\pageref{discussion})\\[0.cm]
 & References &  (\pageref{ref})\\[0.cm]
 & Appendices & (\pageref{annex1})\\
\end{tabular}

\hrulefill 

\section{Introduction}\label{intro}
\subsection{Biological context}
Cell adhesion to the vascular wall is a major process involved e.g in inflammation and metastasis invasion \citep{Granger}. The adhesive interaction between a cell circulating in the blood flow and endothelial cells forming the wall occurs in the presence of hemodynamic forces.
 Adhesion bonds can form between cell transmembrane proteins, called \textit{ligands}, and adhesion receptors at the vascular wall surface. The first step of interaction happens when some bonds form and are stabilized, therefore slowing the cell down: this is the so called \textbf{capture phase}. Then, the cell rolls or slides along the endothelium, as new bonds form in the direction of motion and bonds at the back disassemble. This step is mediated by the selectin receptor molecules. During \textbf{rolling}, endothelial cells may also be stimulated, leading to the recruitment of integrins, another family of cell-receptor molecules. They mediate the cell \textbf{firm adhesion} which allows it to cross the vascular. The cell \textbf{extravasation} out of the vessel leads to further development of the process involved (immune response, invasion of tissues by metastatic cells e.g, see for example \cite{ley_getting_2007}). This process is depicted in Figure \ref{fig:leuco} for leukocytes.
 
The fate of a rolling cell is not clear, since it can either adhere to the wall or be released in the blood flow. Many experiments have been dedicated to the study, in vivo and in vitro, of isolated cells rolling on either monolayers of cultured endothelial cells or surfaces coated with selectin or other molecules. This has helped putting to light the adhesive interaction between rolling cells and the substrate \citep{Springer}, and also the stochastic variability of the rolling motion \citep{Schmid-Schonbein,Goetz}. In particular, for cells rolling on a substrate bearing a uniform concentration of adhesion molecules \citep{Springer}, such a variability suggests that the fluctuations in the cell dynamics near the vascular wall reflects the stochastic nature of the binding dynamics. Our purpose is to both build and study a model of cell dynamics in a blood vessel to investigate the role of the cell's adhesion activity. 

%
%

\begin{figure}
\centering
\includegraphics[scale=0.35]{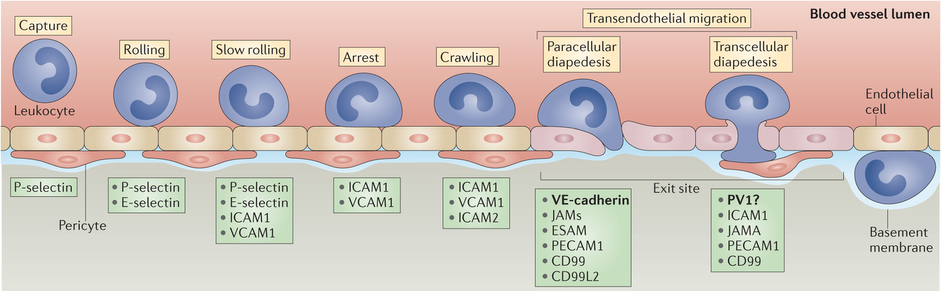} 
\caption{Scheme of the multistep cascade of leukocyte extravasation. Reprinted by permission from Macmillan Publishers Ltd: \href{http://www.nature.com/nri/index.html}{Nature Reviews Immunology} \cite{rolling}, copyright (2007).}\label{fig:leuco}
\end{figure}

\subsection{Existing Models}

There already exist several models and approaches that aim at studying cell adhesion to the vascular wall. The simplest approach consists in ignoring the cell spatial structure and in  
assuming a step-wise, stop-and-go motion. In the work by \cite{Zhao},
the dynamics of the center of mass of the cell follows a stepping process: its trajectory is approximated by a series of steps made when a cluster of bonds dissociates. Two random variables are used to describe the average stepsize and lifetime of bond clusters resisting the applied fluid force. Performing some mean field approximation makes it possible to heuristically derive a Fokker-Planck equation that governs the cell velocity evolution.
This approach allowed to explain the dispersion of rolling velocity data acquired under different experimental conditions. In the same spirit, in the absence of blood flow, macroscopic models have been developed for cell adhesion force \citep{preziosi_vitale}, where bonds are described as a distribution function. Its dynamics follows a maturation-rupture equation, also called renewal equation. In the limit of large bonds turnover, a macroscopic friction coefficient can be computed \citep{milivsic2011asymptotic,milisic2015structured}.

In the work by \cite{2017_GMMN_JTB}, the cell is described as a point carried by the blood flow and interacting with the endothelium, seen as a straight line. At the level of the individual receptor molecule, ligand binding and dissociation are stochastic Poisson processes. The bond forces are described by linear elastic forces. For constant binding and dissociation rates, the averaged problem writes as a deterministic linear Volterra integro-differential equation similar to the one considered by \cite{preziosi_vitale,milivsic2011asymptotic}, and provides some information on the cell location. Linear continuous models are not satisfactory as they can not describe the strong dependency of the cell arrest on the shear flow. In the work by \cite{2017_GMMN_JTB}, force-dependent binding rates and/or nonlinear elastic laws are also considered. For these nonlinear models a threshold on the blood velocity, under which the cell velocity vanishes, was numerically observed. Furthermore the link between the nonlinear stochastic processes and the deterministic equations was only highlighted numerically.

In the works by \cite{hammer_lauffenburger,hammer_apte}, the authors used a numerical model to describe the interplay between hydrodynamic transport and adhesion. In these works, the cell is modeled as a receptor-bearing hard sphere that interacts with a planar ligand-bearing wall. The ligand-receptor binding follows a chemical kinetic dynamics according to Bell's law, \citep{bell}. Bonds then exert elastic forces on the sphere while the linear shear flow exerts both hydrodynamic force and torque. In the PhD thesis of \cite{korn}, the Brownian motion of the sphere is taken into account in order to model the spatial receptor-ligand encountering more precisely. These models allow the numerical study of the influence of weak bonds on the motion of the sphere. More recently, other mechanical models were built to describe more precisely the hydrodynamic forces exerted on the cell \citep{reboux2007bond,efremov2011bistability,li2018rolling}. These works justify a bistable behaviour between either cell release and rolling, or between cell arrest and rolling.

\subsection{Results of the paper}

The main goal of this article is to study the balance between shear forces
and resistive adhesion forces, and its consequences on the dynamics of a cell located near
the arterial wall. To this aim, we build a stochastic model of birth-death type for a particle cell that develops adhesive interaction with the vascular wall. Note that in this setting, no information on the repartition of bonds can be obtained, so that cell rolling can not be described and will thus be confounded  with cell sliding in the following. In order to model the effect of the cell motion on the adhesion dynamics, we consider cell velocity-dependent binding rates. In such a case the model is nonlinear. 
We want to capture the transition from cell rolling to either its stopping or its release in the blood flow. For this purpose, we perform and justify some scaling limits to quantify for the effect of the cell adhesion activity on the dynamics of the cell. 

Following the approach of \cite{2017_GMMN_JTB}, we consider a minimal discrete stochastic model where the cell is a point particle carried by the blood flow in a one-dimensional setting. The adhesion dynamics is modeled by a Markovian Jump process for the formation and disassembly of bonds exerting units of resistive force. The choice of a stochastic model follows biological observations of e.g \cite{Springer}. This model has some similarities with the one heuristically derived by \cite{Zhao} but describes more precisely the interaction with the endothelium, in the spirit of \cite{hammer_apte}. Moreover, by considering constant binding forces instead of elastic ones, the present model differs from the one given by \cite{2017_GMMN_JTB}. Additionally, we take into account both the adhesions growth, modelled by the reproduction of bonds, and the effect of the cell velocity on the binding dynamics, thus leading to a nonlinear stochastic process. 

The timescale of the binding dynamics being actually fast compared to the timescale of cell motion, it is natural to use a time-continuous description of the adhesion dynamics. Therefore, in a second step, we rescale the process in the spirit of \cite{joffe_weak_1986,ethier2009markov}, and we rigorously derive continuous limiting models for the cellular adhesion dynamics. Depending on the renormalization assumptions, we obtain either a deterministic or a stochastic model, which we both study.

The deterministic model predicts that when the shear rate is high or the vascular wall is in a lowly inflamed state (with a low density of adhesion proteins), the cell develops no bonds with the wall, while it can slow down and slide on the wall otherwise, until it eventually stops and adheres to the wall. More generally, the model successfully predicts bistable behaviours in some parameter spaces, between either cell release and sliding or cell sliding and arrest. Consequently, the model is indeed able to describe the two-step process involved in the cell adhesion to the vascular wall. The analysis of the continuous stochastic model is made in two steps. First, when there is no feedback from the cell velocity on the adhesion activity, the model writes as a Cox-Ingersoll-Ross (CIR) process, for which the probability density of the arrest time is explicitly given and numerically shows a transition between the cell stopping and its release in the bloodstream. Then, in the nonlinear case, when the blood flow exerts a feedback on the binding dynamics, we derive the cell mean stopping time using a Fokker-Planck approach. 

We believe that this work can have strong implications for the immune response, drug delivery systems, as well as tumor invasion. More precisely, our model could be used as a first step in the construction of a permeability law for the vascular wall. In a future work, it will be compared with experimental in vivo data obtained by \cite{follain2018hemodynamic} in the context of metastatic invasion.

The plan of this article is the following. In Section 2, we detail the construction of the discrete stochastic model of the individual bond dynamics, and we perform its analysis together with numerical simulations. In Section 3, we proceed to rigorous derivations and study of models of the continuous time-evolution of the cell velocity, which are either deterministic or stochastic. In Section 4 we discuss our results and how they allow the characterization of the long term cell motion.


\section{A Markovian Jump process for the cell adhesion dynamics}\label{sto}

We first present the stochastic model used to investigate the adhesion process outlined in the introduction. We use a classical birth-and-death process for bonds turnover. As in most of the literature, see \cite{bell}, it is assumed that the bond lifetime has an exponential distribution. Then, we study its mathematical properties and perform some numerical simulations. 

\subsection{Modelling approach}

Let us consider a cell carried by the blood flow. We suppose that the distance between the cell and the blood vessel wall is small enough 
so that bonds may always form.  Since the cell is in the vicinity of the wall, we assume that the blood shear flow is 1D, parallel to the vascular wall and with a constant velocity, denoted by $u\in \R_+$. 

Deformability may play a role in the cell dynamics inside the blood flow and in its interaction with the vessel wall. However, some large cells like Circulating Tumor Cells can be very stiff \citep{follain2018hemodynamic}, and still follow the behaviour studied in this paper. Therefore, we hereafter choose to neglect cell deformability and to focus on the interaction between the intracellular adhesion dynamics and the vascular wall. 

In previous studies, see \cite{Hammer,Jadhav} e.g., it was shown that approximating the contact surface by a simple geometrical figure (a circle or a rectangle) and neglecting the increase of the contact surface with the flow shear rate due to cell deformability do not qualitatively change  the analysis.
 Moreover, as suggested in \cite{Bhatia}, the cell adhesion is primarily determined by physicochemical properties of adhesion proteins. In a first approximation, we thus assume  the cell to be a point particle whose position at time $t\geq 0$ is denoted by $X_t$.

\subsubsection*{Velocity model}
To describe the cell motion, we use a non-inertial approximation. Indeed, in a regime of low Reynolds number, viscous forces outweight inertial forces and the momentum equation reduces to the force balance principle: 
\begin{equation*}
V_t  = u - \gamma F_t,
\end{equation*}
where $V_{t} \in \R$ is the cell velocity, $u$ is the blood shear flow and the cell is subjected to a macroscopic resistive force, denoted by $F_t\in \R_+$, and induced by the bonds that contribute to decelerating the cell, see Figure 1. The parameter $\gamma$ is such that $\gamma^{-1}$ is a friction coefficient, following a linear force-velocity relation. The previous equation is valid only for $\gamma F_t\leq u$, as for a maximal force the cell stops and the model is no longer valid. The resistive force arises from the strength of the cell adhesion to the vessel wall. Cellular adhesion is the macroscopic readout of the forces exerted by the wall on the cell through each bond \citep{boettiger2007quantitative}. As a result, we assume that 
\begin{displaymath}
F_t = f N_t\,,
\end{displaymath}
where $f \geq 0$ is the typical force generated by a stabilized bond, and $N_t$ is the number of stabilized bonds at time $t$. Note that $f$ ranges in $\SIunits{\pico \newton}$ \citep{boettiger2007quantitative}, but its precise value depends on the experimental conditions. 

\subsection*{Non-dimensionalization}
We now introduce typical quantities for our problem: the typical force considered is the one generated by a stabilized bond: $ \overline{F}=f$ ; the associated typical velocity writes $\overline{V}=\gamma \overline{F}= \gamma f$ ; and the typical timescale of the model is taken to be the typical lifetime of a bond \citep{alon1995lifetime}: $\overline{T}=1 \SIunits{\second}$. Therefore, keeping the same notations for simplicity, the nondimensionalized problem writes 

\begin{equation*}
V_t = u -  N_t\,.
\end{equation*}
We now construct the process $(N_t)_t$ of the number of stabilized bonds over time. 

\subsubsection*{Stochastic model for the adhesive force}
Let us now introduce the simple discrete model that we use to describe the individual bonds dynamics. We write $(N_t)_t$ the Markovian processes for the number of stabilized bonds at time $t$, which follows a classical birth-and-death-like dynamics in the state space $\mathbb{N}$. 
\begin{itemize}
\itemb New bonds form spontaneously at rate $c(u)=c \mathds{1}_{u \leq u^*}$, for a velocity threshold $u^*$ above which no new bonds can be created, due to the high blood velocity.
\itemb Each existing bond can reproduce at constant rate $r$. This phenomenon reflects the local reinforcement of the connection to the vessel wall by the involvement of integrins in adhesion growth, which can be imputed to cytoskeletal forces or external stresses \citep{Geiger}. Moreover, intuitively, if an adhesion complex is composed of a large number of bonds, the unbounded molecules can find an attachment more easily than a less stable adhesion formed of fewer molecules. 
\itemb Each bond dissociates at the velocity-dependent rate $d(V_t)= d e^{\alpha V_t} = d e^{\alpha(u-N_t)}$. We choose here an exponential relation, where $d$ is the unstressed-bonds dissociation rate, and $\alpha$ is a sensitivity parameter. This choice accounts for the fact that the average lifetime of an adhesion site changes with the applied tension exerted by the blood flow: the faster the cell, the shorter the bonds' lifetime. Note that since the cell velocity is bounded by $u$, the dissociation rate is also bounded. In the following, we will write indifferently $d(V_t)$ or $d(N_t)$.
\end{itemize}

\begin{remark}
The rate of a single bond formation is mainly determined by the time spent by the two proteins near one another. Therefore, the rate $c$ should depend on the cell velocity when it is non zero. A more realistic choice for $c$ would be a decreasing function of the instantaneous cell velocity $V_t$ such as $c(v) = \left(u_* - v\right)_+ $, where $\left(\cdot \right)_+$ denotes the positive part function. Since $V_t$ depends on $N_t$, the rate rewrites $c(n) = \left(u_* - u+ n\right)_+ $ with $n$ the number of stabilized bonds. For such a choice, and assuming that $v\leq u_*$, the dependency on $n$ then only provides an additional contribution to the reproduction rate.  
\end{remark}
Note that these rates are also representative of the adhesive properties of the endothelial cells forming the vessel wall. The key point here is that there is a feedback loop between the instantaneous cell velocity and the bonds dynamics.  More elaborate dependency could be considered (see e.g \cite{milivsic2011asymptotic,milisic2015structured}), in particular involving age dependencies to model the bond elasticity (see \cite{2017_GMMN_JTB}),
but we choose to keep a minimal set of parameters, for both simplicity and the sake of clarity.


The balance between the adhesion force on the one hand, and the load and torque created by the blood flow on the other hand then determines the outcome of the dynamics: either the cell rolling and arrest or its release in the blood flow. 

\subsection{Mathematical properties of the discrete model}
In this section we derive some mathematical properties based on stochastic analysis tools, and we perform numerical simulations of the process $(N_t)_t$. The dissociation rate being nonlinear, classical tools do not apply. Since we are interested in the dynamics while $V_t \geq 0$, that is, while $N_t \leq u$, we define the stopping time
\begin{displaymath}
\tau_u := \inf_{t \geq 0} \{ N_t \geq u\}\,.
\end{displaymath}
We are interested in the Markovian jump process $(N_t)_{t\in [0,\tau_u]}$ defined by the following transitions:
\begin{equation}\label{eq:taux}
n \mapsto \left\lbrace
\begin{array}{ll}
n+1 & \text{ at rate } \lambda(n) = c(u) + r n \, , \\
n-1 & \text{ at rate } \mu(n) = d(n) n\,,  \\
\end{array}\right.
\end{equation}
where $\lambda$ and $\mu$ are defined on $\N$, and are bounded. Such a process is classically well-defined (see e.g \cite{ethier2009markov}). We also control the mean number of bonds in finite time. 

\begin{proposition}[Moments propagation]\label{momentprop1Ddiscret}
Assume that there exists $p \geq 1$ such that $\mathbb{E}\left[N_0^p\right] <+\infty$. Then,
\begin{displaymath}
\mathbb{E}\left[\sup_{t\in [0,T \wedge \tau_u]}N_t^p\right] <+\infty\, \forall \; T >0\,.
\end{displaymath}
\end{proposition}

\begin{proof}
This Proposition is proved in a more general framework in Appendix \ref{annex1}.
\end{proof}
The mean path of this process can not be fully studied in the general case. Indeed, for $\E\left[N_0\right] < + \infty$, we classically write the mean equation 
\begin{equation}\label{eq:lin}
 \E\left[N_{t\wedge \tau_u}\right]= \E\left[N_0\right] +  c(u) \E[t \wedge \tau_u]+\E \left[ \int_{0}^{t\wedge \tau_u} \left( r-de^{\alpha(u-N_s)} \right)N_s  \Dx s\right]\,
\end{equation}
where, even when assuming $t\leq \tau_u$, the nonlinearity prevents any analysis. In a simpler case, namely when there is no feedback from the cell velocity on the adhesion dynamics, and without considering the stopping time $\tau_u$, we obtain a classical immigration-birth-death process, that was already studied by \cite{Bansaye2015,dessalles2018exact}. More precisely, $(N_t)_t$ then follows a negative binomial distribution of parameters $\left(\frac{c(u)}{r},\frac{r}{d}\right)$. It follows that

\begin{equation}\label{eq:meanODE}
\E[N_t] = \left\lbrace 
\begin{array}{ll}
\E[N_0] + c(u)   t  & \text{ if }r = d\, , \\
\E[N_0] e^{(r-d)t} + \frac{c(u)}{r-d} \left(e^{(r-d)t} -1 \right) & \text{ otherwise.}
\end{array}\right.
\end{equation}
At steady state, one finds 
\begin{displaymath}
\E[N]_\infty := \left\lbrace 
\begin{array}{ll}
\frac{c(u)}{d-r}  & \text{ for }r < d\, , \\
+\infty & \text{ otherwise,}
\end{array}\right.
\Leftrightarrow
\E[V]_\infty := \left\lbrace 
\begin{array}{ll}
u -  \frac{c(u)}{d-r}  & \text{ for }r < d\, , \\
-\infty & \text{ otherwise,}
\end{array}\right.
\end{displaymath}
with 
\begin{equation*}
\text{Var}(N_{\infty}) = \frac{c(u)}{d\left(1-\frac{r}{d} \right)^2}\,.
\end{equation*}
In the case of a circulating cell, we are only interested in the situation where $v\leq u$. As a consequence, assuming $\E[N_0]=0$ and $u>0$, we obtain the following mean asymptotic behaviours:
\begin{center}
\begin{tabular}{l|l||c|c}
\multirow{2}{*}{$u>u_*$} &  & $v=u$ &\textbf{Cell release} \\
\multirow{2}{*}{$c(u)=0$} & & &\\
 &&\multicolumn{2}{r}{}\\ \hline \hline
\multirow{2}{*}{$u\leq u_*$} &\multirow{4}{*}{$r<d$} & \multirow{2}{*}{$\displaystyle0< \frac{c}{d-r}<u$} & \multirow{2}{*}{\textbf{Cell sliding}} \\
\multirow{2}{*}{$c(u)=c>0$}&&&\\
\cline{3-4}
&& \multirow{2}{*}{$\displaystyle 0< u \leq \frac{c}{d-r}$} & \multirow{2}{*}{\textbf{Cell arrest}} \\ 
&&&\\
\cline{2-4}
&\multirow{2}{*}{$r\geq d$} & \multicolumn{2}{r}{\multirow{2}{*}{\textbf{Cell arrest}}}\\
&&\multicolumn{2}{r}{}
\end{tabular}
\end{center}
These results show that the birth-and-death dynamics without feedback intrinsiquely carries a dichotomic asymptotic behaviour. However, such a model is not fully satisfactory, since the only shear-treshold effect that appears concerns the initiation of the adhesive interaction and not the cell fate.

\subsection{Numerical Simulations}
The process $(N_t)_t$ being Markovian, it can be simulated directly events after events. Consider the population size $N_{T_k}$ at time $T_k$. Then, 
\begin{itemize}
\itemb the global jump rate writes $\varsigma_k = \lambda(N_{T_k})+\mu(N_{T_k})$, which means that the time before the next event is a random variable following an exponential law of parameter $\varsigma_k$. A realization of this law provides $T_{k+1}$. 
\itemb a new bond is created with probability $\lambda(N_{T_k})/ \varsigma_k$, while with probability $\mu(N_{T_k}) / \varsigma_k$ a randomly chosen bond disassembles, and $N_{T_{k+1}}$ follows.
\end{itemize}
The above procedure can be iterated to give the time evolution of the process. Numerical simulations of the process are displayed in Figure \ref{fig:discrete}. One may observe that the velocity may either shrink to zero or remain close to $u$ for the same parameter values. Note also that sliding phases are observed in both cases.

\begin{figure}
\captionsetup[subfigure]{justification=centering}
	\begin{center}
	\subfloat[The stochasticity induces a sliding phase that ends up in the cell arrest.]{
      \includegraphics[width=0.45\textwidth]{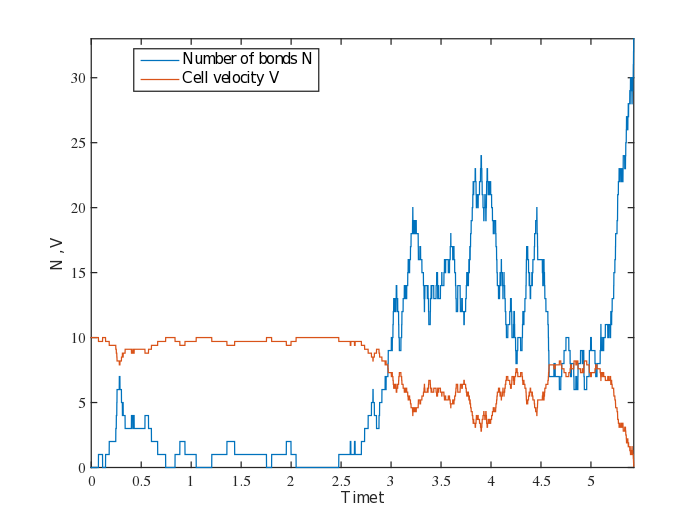} 
                         }
     	\subfloat[The cell only experiences a small sliding phase that does not prevent its release in the blood flow.]{
      \includegraphics[width=0.45\textwidth]{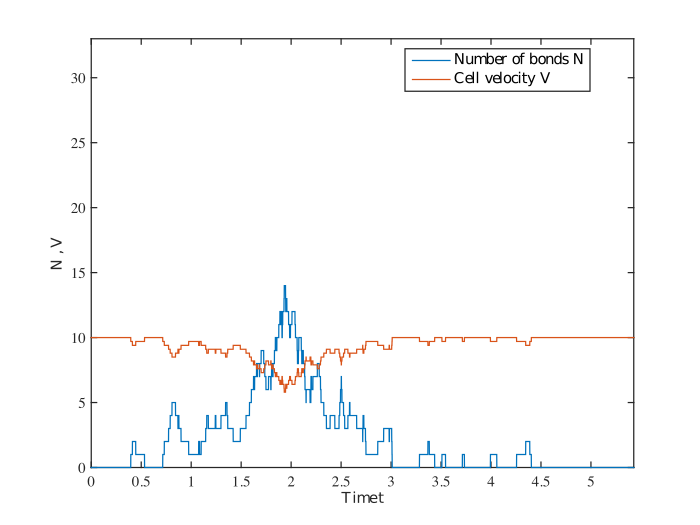}
                         }     
\caption{Numerical simulations of the discrete process defined by \eqref{eq:taux}. Parameters: $(u,c,r,d,\alpha)=(33,4,5,3,0.1)$.}\label{fig:discrete}
	\end{center}
\end{figure}


\section{Continuous limiting models and characterization of the dynamics}\label{cont}

 In this section we separate the scale of the adhesion dynamics from the one of the cell motion. Such scale separation is justified by the large number of bonds, and by the very fast binding dynamics, as compared to the cell displacement. As an illustration, a bond lifetime ranges around $1\SIunits{\second}$, a typical binding rate ranges aroung $10^3$ $\SIunits{\reciprocal \second}$ whereas the cell rolling velocity ranges around $30 \SIunits{\micro\meter  \reciprocal \second}$ (see \cite{2017_GMMN_JTB} and the references therein). As will be seen next, this assumption allows the use of a scaling approach to derive two continuous limiting models, for which deeper analysis can be pursued.

More precisely, 
let $K\geq 1$ be a parameter such that $1/K$ scales the force generated by a fraction of bonds.
Moreover, we assume that the bond dynamics gets faster and faster as the fraction of bonds gets smaller: we consider now $K$-dependent rates $c^K,\; r^K,$ and $ d^K$, related to the process $(N_t^K)_t$. We then define the renormalized process $(Z_t^K)_t$ by 
 \begin{equation}\label{proc_renorm1D}
 Z_t^K = \frac{1}{K} N_t^K \in \frac{1}{K}\N.
 \end{equation}

\subsection{Deterministic continuous limiting model}
We consider the following rates: 
\begin{equation}\label{rappel_rates}
 c^K(u) = K c(u)\, , \;  r^K= r\, , \textrm{ and } d^K(KZ^K_t) = d(Z^K_t)\, .
\end{equation}
\noindent
In other words, in considering an approximation for a continuous description of bonds, the formation rate is larger, while the self-enhancement of the adhesion dynamics and the bonds typical lifetime stay unchanged.
From the modelling viewpoint, such an assumption amounts to considering adhesion clusters that involve only a small number of  proteins on each side (wall and cell). 
Note that since the reproduction is not accelerated, the clustering that leads to adhesion growth is not large enough to induce stochastic fluctuations at the cell level. 
 In this context, we obtain the following convergence result.

\begin{theorem}\label{lim_deterministe_1D}
Consider the sequence of processes $(Z^K)_K$ for $(Z^K_t)_{t\geq 0}$ defined by \eqref{proc_renorm1D} and rates defined by \eqref{rappel_rates}.
 If $Z_0^K \underset{K\rightarrow + \infty}{\longrightarrow} n_0\in \R_+$ in probability, and if 
\begin{equation*}
\sup_{K >0} \mathbb{E}\left[(Z_0^K)^2\right]<+ \infty\, ,
\end{equation*}
then, for $T>0$, $(Z^K)_{K>0}$ converges in law in $\mathbb{D}\left([0,T],\R_+ \right)$ to the unique continuous function $n \in \mathcal{C}([0,T],\R_+)$ solution to
\begin{equation}\label{limite1_eq1D}
n(t) = n_0 + \int_0^t c(u) + ( r - d(n(s)) ) n(s) \Dx s \, .
\end{equation}
\end{theorem}

\begin{remark}
By the Gronwall lemma, one has for $T < \infty$,
\begin{equation*}
\sup_{t\in [0,T]} n(t) \leq  (n_0 + c T) e^{r T}<+\infty\, ,
\end{equation*}
showing that the global density stays finite in finite time. 
\end{remark}

\begin{proof}\label{preuve_deterministe1D}
The proof is displayed in Appendix \ref{app:det_conv}. 
\end{proof}
We now perform the analysis of the limiting problem. Let us define the function $F$ by $F(n)=c\mathds{1}_{u\le u_*}+ \left( r -de^{\alpha (u- n)}\right) n$. We prove the following result.

\begin{proposition}\label{prop:det_asympt}
Assume that the rates are given by \eqref{rappel_rates}. Then the stationary state(s) $n^\infty$ of \eqref{limite1_eq1D} are as follows.
\begin{enumerate}
\item If $u> u_*$, then the system admits two stationary states  $n^\infty_1= 0$ and $n^\infty_2 =u- \frac{1}{\alpha }\ln (\frac{r}{d}) $. The smallest is stable and the largest is unstable. 
\item If $u\le u_*$, 
\begin{enumerate}
\item for $u\le  \frac{1}{\alpha}\ln \left( \frac{r}{d}\right)$, then  $n^\infty = +\infty$. 
\item for $u> \frac{1}{\alpha}\ln \left( \frac{r}{d}\right)$, then there exists a unique $0<\bar n< \frac{1}{\alpha}$ such that $F'(\bar{n})=0$ and\begin{enumerate}
\item If $F(\bar{n})>0$, then  $n^\infty = +\infty$.
\item If $F(\bar{n})=0$, then $\bar{n}$ is the unique stationary solution.
\item If $F(\bar{n})<0$, then there exists two stationary solutions $n^\infty_1$ and $n^\infty _2$, such that $ 0< n^\infty_1 < \bar{n} <n^\infty_2< + \infty$, the smallest being stable and the largest unstable. 
\end{enumerate}
\end{enumerate}
\end{enumerate}
\end{proposition}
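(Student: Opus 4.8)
The statement is a qualitative study of the scalar autonomous ODE $n'(t)=F(n(t))$ with $F(n)=c\,\mathds{1}_{u\le u_*}+\bigl(r-de^{\alpha(u-\gamma n)}\bigr)n$, so the plan is a phase–line argument once the zero set of $F$ and the sign of $F'$ there are identified. I would first record the preliminaries: on $[0,+\infty)$ the map $F$ is smooth and (since $n\mapsto ne^{-\alpha\gamma n}$ and its derivative are bounded for $n\ge 0$) globally Lipschitz, and $F(0)=c\,\mathds{1}_{u\le u_*}\ge 0$, so $[0,+\infty)$ is positively invariant and the Cauchy problem with $n_0\in[0,n^*]$ has a unique global solution. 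For such an equation $t\mapsto n(t)$ is monotone, it converges to the first zero of $F$ met in the direction fixed by the sign of $F(n_0)$ (or diverges to $+\infty$ if there is none), and a zero $n_\infty$ is locally asymptotically stable when $F'(n_\infty)<0$ and unstable when $F'(n_\infty)>0$; the borderline $F'(n_\infty)=0$ with $F$ keeping one sign on each side produces a one-sided semi-stable equilibrium. Everything then reduces to two case analyses.

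\textbf{The regime $u>u_*$.} Here the creation term vanishes and $F(n)=n\,h(n)$ with $h(n):=r-de^{\alpha u}e^{-\alpha\gamma n}$. Since $h$ is strictly increasing from $h(0)=r-de^{\alpha u}$ up to the asymptote $r$, the point $n=0$ is always an equilibrium, and $h$ has a (necessarily unique) positive zero iff $h(0)<0$, i.e.\ iff $u>\tfrac1\alpha\ln(r/d)$, in which case it equals $n^2_\infty=\tfrac u\gamma-\tfrac1{\alpha\gamma}\ln(r/d)$. I would then compute $F'(0)=h(0)$ and, using $de^{\alpha(u-\gamma n^2_\infty)}=r$, $F'(n^2_\infty)=n^2_\infty h'(n^2_\infty)=\alpha\gamma r\,n^2_\infty>0$; since the sign of $F'(0)$ flips with that of $h(0)$ this yields item 1(a) (the smaller equilibrium $0$ stable, the larger unstable) and item 1(b) (only $n=0$ remains stationary when $h(0)\ge 0$).

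\textbf{The regime $u\le u_*$.} Now $F(0)=c>0$, so $0$ is never stationary, $F(n)\sim rn\to+\infty$, and the picture is governed by the shape of $F'$. Writing $F'(n)=r-g(n)$ with $g(n):=de^{\alpha(u-\gamma n)}(1-\alpha\gamma n)$, one computes $g'(n)=d\alpha\gamma e^{\alpha(u-\gamma n)}(\alpha\gamma n-2)$, so $g$ strictly decreases on $[0,2/(\alpha\gamma)]$ and strictly increases on $[2/(\alpha\gamma),+\infty)$, with $g(0)=de^{\alpha u}$, $g(n)\to 0^-$ at infinity, and $g<0$ for $n>1/(\alpha\gamma)$. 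Hence $g(n)=r>0$ can only be solved on $(0,1/(\alpha\gamma))$, where $g$ is strictly monotone, so it has at most one solution — exactly one, call it $\bar n\in(0,2/(\alpha\gamma))$, precisely when $g(0)=de^{\alpha u}>r$, i.e.\ $u>\tfrac1\alpha\ln(r/d)$. If $u\le\tfrac1\alpha\ln(r/d)$ then $g\le r$ on $(0,\infty)$, $F$ is increasing, $F\ge F(0)=c>0$, there is no equilibrium, and $n(t)\to+\infty$ (item 2(a)). If $u>\tfrac1\alpha\ln(r/d)$ then $F'<0$ on $(0,\bar n)$ and $F'>0$ on $(\bar n,+\infty)$, so $F$ decreases from $c>0$ to its global minimum $F(\bar n)$ and then increases to $+\infty$; counting zeros of $F$ according to the sign of $F(\bar n)$ gives $0$, $1$ or $2$ equilibria, and reading the sign of $F'$ at each zero gives the stability statements of 2(b): when $F(\bar n)<0$ the two roots satisfy $0<n^1_\infty<\bar n<n^2_\infty$ with $F'(n^1_\infty)<0<F'(n^2_\infty)$, and when $F(\bar n)=0$ the unique root is $\bar n$ itself (semi-stable).

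\textbf{Main obstacle.} The only step requiring genuine care is the unimodality analysis of $g$ in the last regime: I must show that $F'$ vanishes at most once on $\R_+$ and that this critical point lies in $(0,2/(\alpha\gamma))$, since it is exactly this ``$F(0)=c>0$, one interior minimum, then $\nearrow+\infty$'' profile of $F$ that forces the trichotomy on the number of stationary states. Everything else — global existence/uniqueness and invariance of $[0,+\infty)$, the phase–line description of the $\omega$-limits, and the $F'$-sign stability criterion including the semi-stable borderline case — is routine.
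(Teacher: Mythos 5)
Your proposal is correct and follows essentially the paper's own route: a phase-line analysis of $n'=F(n)$, a direct computation in the regime $u>u_*$, and, for $u\le u_*$, the same unimodality argument for $F'$ — your function $g=r-F'$ has $g'=-F''$, so your monotonicity analysis of $g$ is exactly the paper's variation table for $F'$ obtained from the sign of $F''$, leading to the unique critical point $\bar n<\frac{2}{\alpha\gamma}$ and the trichotomy on the sign of $F(\bar n)$. The only differences are cosmetic (you localize $\bar n$ in $(0,\frac{1}{\alpha\gamma})$ and spell out the $\omega$-limit and stability criteria that the paper leaves implicit).
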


\begin{proof} The case $u> u_*$ follows from a direct computation. Consider the case where $u\le  u_*$, then one has 
\begin{equation*}
n'(t) =  c+ \left( r -de^{\alpha (u- n(t))}\right) n(t)= F(n(t))\,.
\end{equation*}
To understand the dynamics of the velocity of the cell, we have to understand
that of $n$. We are therefore interested in the values of $n$ for which $F(n)=0$, which are the steady states, and in the sign of $F$ required to obtain the stability of the steady states.
A quick computation shows that 
\begin{eqnarray*}
F'(n) &=& r +d\left(\alpha  n -1\right) e^{\alpha (u- n)}\,, \\
F''(n) &=&  \alpha  d \left(2-\alpha  n\right) e^{\alpha (u- n)} \,.
\end{eqnarray*}
We can study the sign of $F''(n)$ to obtain the variations of $F'$. This yields the following variation table:
\begin{center}
\begin{variations}
n	&	\z	&		&\frac{2}{\alpha}&		& \pI	 \\ \filet
F''(n)&		& +		&	\z	&	-	&		 \\ \filet
\m {F'(n)} &r -de^{\alpha u} 	&	\c	& \h {F'(\frac{2}{\alpha})>0}&	\d	&  r	  \\
\end{variations}
\end{center}
Since $F'(2/\alpha)>0$ and $r>0$, the existence of a stationary state $n^{\infty}$ such that $F(n^{\infty})=0$ depends on the sign of $F'(0)=r-de^{\alpha u}$. As a consequence, 
\begin{itemize}
\item If $u\le \frac{1}{\alpha}\ln \left( \frac{r}{d}\right)$, then $\forall n\in \R_+$, $F(n)\ge c>0$, hence $n^\infty = +\infty$. 
\item If $u> \frac{1}{\alpha}\ln \left( \frac{r}{d}\right)$, then $F'(0)<0$, and there exists a unique $0<\bar n< \frac{2}{\alpha }$ such that $F'(\bar{n})=0$. This provides the sign of $F'$, from which we obtain the variation table for $F$ that leads to the result:
\begin{center}
\begin{variations}
n	&	\z	&		&\bar n <\frac{2}{\alpha }&		& \pI	 \\ \filet
F'(n)&		& -		&	\z	&	+	&		 \\ \filet
\m {F(n)} &\h {c>0} 	&	\d	& F(\bar n)&	\c	&  \h {\pI}	  \\
\end{variations}
\end{center}
\end{itemize}
Notice that since $u> \frac{1}{\alpha}\ln \left( \frac{r}{d}\right) \Leftrightarrow de^{\alpha u} > r$, both behaviours arise according to the comparison between the reproduction and death rates. Note also that since $F'$ is strictly increasing on $\left(0,\frac{2}{\alpha}\right)$ and since $F'(1/\alpha) = r>0$, we obtain that $\overline{n}<1/\alpha$.
\end{proof} 

\begin{remark}
Note that if  $u_*\ge u> \frac{1}{\alpha}\ln \left( \frac{r}{d}\right)$ the three cases described in the proposition above may occur. Indeed, consider the particular case where $d>r$ and $u=\frac{1-\frac{r}{d}}{\alpha}$, then $\bar n= u$ and $F(\bar n)=c-\frac{(d-r)^2}{\alpha d }$ whose sign depends on the value of $c$. The dynamics is then dependent on the ability of the cell to form bonds at primary contact.
\end{remark}
We are now interested in the biological interpretation of this study. Since our model is only valid up until the cell adhesion to the vessel wall, we consider Equation \eqref{limite1_eq1D} up until the adhesion density reaches $u$. The following Corollary locates $u$ with respect to the stationary state(s) of the system. This allows us to assess the cell fate depending on the parameter values. For $\alpha>0$, denote the key parameters
\begin{equation*}
U_{\alpha}:=\frac{1}{\alpha}\ln(r/d), \qquad \overline{U}_{\alpha} := \frac{1}{\alpha}(1-r/d), \qquad U_c := \frac{c}{d-r}, \text{ and }  \overline{C}:=\frac{1}{\alpha d} (r-d)^2\,.
\end{equation*}

\begin{corollary}\label{coro:det}
Let $n_0 \in [0,u]$ with $u>0$. Then, Equation \eqref{limite1_eq1D} admits either one stationary state denoted by $n^\infty$, or two stationary states $n_{1,2}^{\infty}$ such that $0< n_{1}^{\infty}<n_{2}^{\infty}$, the smaller one being stable and the larger unstable. 
\begin{enumerate}
\item If $u> u_*$:
\begin{enumerate}
\item for $r\leq d$, then $n^\infty _1= 0$ and $n^\infty _2 =u- U_{\alpha} \geq u$. 
\item for $r>d$, if $u \leq U_{\alpha}$, $n^\infty=+\infty$ ; if $u > U_{\alpha}$, one has $n^\infty_1=0< n^\infty _2 =u- U_{\alpha} < u$.
\end{enumerate}
\item If $u\leq u_*$, then 
\begin{enumerate}
\item for $r< d$, $\exists ! 0<\bar n< \frac{1}{\alpha}$ such that $F'(\bar{n})=0$. 
\begin{enumerate}
\item For $u > \overline{U}_{\alpha}$, $F'(u)>0$, so that $u>\overline{n}$.
\begin{enumerate}
\item If $u> U_c$, then we have $0<n_1^{\infty} < \overline{n}<u<n_2^{\infty}$. 
\item If $u= U_c$, then $0<n_1^{\infty} <n_2^{\infty}=u$.
\item If $ u < U_c $, then if $F(\bar n) <0$, one has $0<n_1^{\infty} < \overline{n}<n_2^{\infty}<u$ ; if $F(\bar n)=0$, then $n^{\infty}=\overline{n}<u$ ; if $F(\bar n)>0$, then $n^{\infty}=+\infty$.
\end{enumerate}
\item For $u = \overline{U}_{\alpha}$, then $u=\overline{n}$. 
\begin{enumerate}
\item If $c> \overline{C}$, then $n^{\infty}=+\infty$.
\item If $c= \overline{C}$, then $n^{\infty}=u$.
\item If $c<\overline{C}$, then $0< n_1^{\infty} < u < n_2^{\infty}$.
\end{enumerate}
\item For $u < \overline{U}_{\alpha}$, then $0<u< \overline{n} $.
\begin{enumerate}
\item If $u= U_c$, then $n_1^{\infty}=u<n_2^{\infty}$.
\item Otherwise, if $F(\bar n)=0$, then $u < n^{\infty}=\overline{n}$ ; if $F(\bar n)>0$, then $n^{\infty}=+\infty$. Finally, if $F(\bar n)<0$, then  $u< n_1^{\infty}<n_2^{\infty}$ when $u< U_c$, and $n_1^{\infty}<u< \overline{n}$ when $u>U_c$.
\end{enumerate}
\end{enumerate}
\item For $r=d$, there exists a unique $0<\bar n< \frac{1}{\alpha}$ such that $F'(\bar{n})=0$ and $u>\overline{n}$. If $F(\bar n) <0$, we have $0< n_1^{\infty} < \overline{n} < n_2^{\infty}<u$. If $F(\bar n)=0$, $n^{\infty}=\overline{n}<u$. If $F(\bar n ) >0$, $n^{\infty}=+\infty$. 
\item For $r>d$, $F(u)>0$.
\begin{enumerate}
\item if $u \leq U_{\alpha}$, then $n^{\infty}=+\infty$.
\item if $u>U_{\alpha}$, then $\exists ! 0<\bar n< \frac{1}{\alpha}$ such that $F'(\bar{n})=0$ and $u> \overline{n}$. Then, if $F(\bar n) <0$, we have $0< n_1^{\infty} < \overline{n} < n_2^{\infty}<u$. If $F(\bar n)=0$, $n^{\infty}=\overline{n}<u$. If $F(\bar n ) >0$, $n^{\infty}=+\infty$.
\end{enumerate}
\end{enumerate}
\end{enumerate}
\end{corollary}

\begin{proof}
The Corollary results from Proposition \ref{prop:det_asympt} combined with the sign analysis of $F(u)$ and $F'(u)$ that depends on parameter values. 
\end{proof}

Let us give an interpretation of the above results. First, note that when there are two stationary states, the largest one denoted by $n_2^{\infty}$ is unstable. Therefore, if $n_2^{\infty}<u$, it follows that for $n(0)>n_2^{\infty}$, the adhesion density will increase until the cell adheres to the wall. This can be interpreted as another stable stationary state for the model, which therefore captures the bistability previously described in experimental and theoretical studies. This property will be further discussed in Section \ref{discussion}. 

Table \ref{table:limdet1} gives the outcome of the cell dynamics in the case where $u>u_*$. In this case, the blood flow is so fast that the cell does not initiate any new adhesion. However, the dynamics may still be interesting in the case where some bonds already exist and may be stabilized by the growth of adhesions. Then, if $r\leq d$, dissociation is always more frequent than strenghtening of existing adhesions, so that starting with $n_0\leq u$, the cell is released into the bloodstream. When $r>d$, and depending on the sensitivity $\alpha$ of the dissociation rate on the cell velocity, the cell either goes back in the bloodstream or adheres firmly to the wall, while sliding is an unstable state. 

Table \ref{table:limdet2} shows the possible outcomes in the case where $u\leq u_*$. In this situation, the whole adhesion dynamics is active. Then, the conditions discriminating between different cell fates are based on the balance between the formation of adhesion bonds (related to $r$ and $c$), and their dissociation (related to $d$ and $\alpha$). This study shows how Equation \eqref{limite1_eq1D} has more complex behaviours in comparison with the mean linear ODE \eqref{eq:meanODE}. 

Let us comment on these results. First of all, not surprisingly, our model successfully predicts the shear-stress threshold above which nor capture nor sliding occurs when the cell does not initially interact with the wall. This is explained by the regulation of bonds dissociation by the cell velocity. The model also predicts the firm adhesion to the wall that is observed experimentally. Finally, the model provides conditions on the parameter space for both stable sliding and bistable behaviour between cell sliding and cell adhesion. This results come from the competition between bonds formation and rupture that occurs in the cell-wall contact area.


\begin{figure}[h]
\centering
\subfloat[][Case $u> u_*$]{\label{table:limdet1}
\begin{tabular}{l|l||l|l|r}
  \multirow{5}{*}{$r> d$} &  \multirow{2}{*}{$u\leq U_{\alpha}$} & \multicolumn{2}{|l|}{
\multirow{2}{*}{$
n^{\infty}= \begin{cases}
+\infty \text{ if initial contact,}\\
0 \text{ otherwise.}
\end{cases}
$} }& \textbf{Cell arrest} if initial contact,\\
 & & \multicolumn{2}{|l|}{ } & \textbf{Cell release} otherwise \\
 &&\multicolumn{2}{|l|}{ }&\\
 &$u> U_{\alpha}$ &  \multicolumn{2}{|r|}{$\textcolor{blue}{n_1^{\infty}=0} <   \textcolor{red}{n_2^{\infty}=u-U_{\alpha}}<u$} & \textbf{\textcolor{blue}{Cell release}}, \textbf{\textcolor{red}{Cell sliding}} \\
 & & \multicolumn{2}{|r|}{} & and \textbf{Cell arrest} \\
\hline
 \multicolumn{2}{c||}{ \multirow{2}{*}{$r\leq d$}} & \multicolumn{2}{|r|}{\multirow{2}{*}{$\textcolor{blue}{n_1^{\infty}=0} < u<  \textcolor{red}{n_2^{\infty}=u-U_{\alpha}}$}} & \multirow{2}{*}{\textbf{\textcolor{blue}{Cell release}}} \\
\multicolumn{2}{c||}{ } & \multicolumn{2}{|r|}{}& \\
\end{tabular}
}\\
\subfloat[][Case $u\leq u_*$]{\label{table:limdet2}
\begin{tabular}{l|l|l||l|r}
 \multirow{2}{*}{$r>d$} & \multicolumn{2}{c||}{$u\leq U_{\alpha}$}  &$n^{\infty}=+\infty$ &\textbf{Cell arrest} \\
 \cline{4-5}
  & \multicolumn{2}{c||}{$u> U_{\alpha}$}  &\multirow{3}{*}{$u> \overline{n}$} &   \textbf{Cell arrest}\\
 \cline{1-3}
   \multicolumn{3}{c||}{$r=d$} & & or  \textbf{Cell sliding}\\
   \cline{1-3}
   \multirow{10}{*}{$r<d$} & \multirow{4}{*}{$u>\overline{U}_{\alpha}$} & $u< U_c$ & & or \textbf{Two sliding regimes}\\
    \cline{3-5}
   & &  \multirow{2}{*}{$u=U_c$} &  \multirow{2}{*}{$0< \textcolor{blue}{n_1^{\infty}} <  \textcolor{red}{n_2^{\infty}}<u$} &\textbf{Two sliding regimes}\\
   & & & & \textbf{and {Cell arrest}}\\
   \cline{3-5}
   & & $u>U_c$ & \multirow{2}{*}{$0< \textcolor{blue}{n_1^{\infty}} <u<  \textcolor{red}{n_2^{\infty}}$} & \multirow{2}{*}{\textbf{\textcolor{blue}{Cell sliding}} and \textbf{\textcolor{red}{Cell arrest}}}\\
  \cline{2-3}
   & \multirow{3}{*}{$u=\overline{U}_{\alpha}$} & $c< \overline{C}$ &  &\\
   \cline{3-5}
   & & $c=\overline{C}$ & $n^{\infty}=u$ &\multirow{3}{*}{\textbf{Cell arrest}}\\
     & &$c> \overline{C}$ & $n^{\infty}=+\infty$ &\\
   \cline{2-3}
    & \multirow{3}{*}{$u< \overline{U}_{\alpha}$} & $u=U_c$ & $0< \textcolor{blue}{n_1^{\infty}=u} <  \textcolor{red}{n_2^{\infty}}$ &\\
    \cline{4-5}
    & & \multirow{2}{*}{$u\neq U_c$} &\multirow{2}{*}{$0<u<\overline{n}$} & \textbf{\textcolor{blue}{Cell sliding}} and \textbf{\textcolor{red}{Cell arrest}} \\
    & & &   &or \textbf{Cell arrest}
\end{tabular}
}
\caption{Table of stationary solutions of \eqref{limite1_eq1D} and corresponding situations. Stationary solutions are in red if unstable, blue if stable.}\label{table:limdet}
\end{figure}

\subsection{Stochastic continuous limiting model}\label{sec:demography}
In this section, we consider the following rates:
\begin{equation}\label{eq:rates_sto}
c^K(u) = K c(u),\quad r^K=  r+ K a, \text{ and } d^K(KZ^K_t) = d(Z^K_t) + K a,
\end{equation}
with $K>0$ and $a>0$. The whole adhesion dynamics is therefore accelerated.
Note that using the same acceleration for reproduction and death permits to  keep the same bounded individual growth rate $r^K - d^K = r-d$. 
This way, even if each adhesion bond reproduces and dies infinitely faster, its contribution to the global adhesion growth remains the same.

\begin{theorem}\label{theo_stoch1D}
Consider the sequence of processes $(Z^K)_K$ for $(Z^K_t)_{t\geq 0}$ defined by \eqref{proc_renorm1D} and rates defined by \eqref{eq:rates_sto}.
If for $K \rightarrow + \infty$ the initial value $Z_0^K$ converges in law to a $\R_+$-valued random variable $N_0 $, with 
\begin{equation*}
\sup_{K >0} \mathbb{E}\left[(Z_0^K)^2\right]<+ \infty,
\end{equation*}
then $(Z^K)_{K>0}$ converges in law in $\mathbb{D}\left([0,T],\R_+ \right)$ to the continuous process $N=(N_t)_{t\in [0,T]} \in \mathcal{C}([0,T],\R_+)$ solution of 
\begin{equation}\label{SDE_discret}
\Dx N_t =  b(N_t)\Dx t  + \sigma(N_t)\Dx B_t ,
\end{equation}
with $B_t$ a Brownian Motion, $b(N_t) =c(u) + (r-d(N_t))N_t$ and
$\sigma(N_t) =\sqrt{2a N_t}$.
\end{theorem} 

\begin{proof}
The proof is displayed in Appendix \ref{annex:stoch}.
\end{proof}

\begin{remark}
The solution to the SDE \eqref{SDE_discret} is almost surely positive if $b(n)\geq 0$ for all $n\geq 0$, and for a positive initial state (see the 1D comparison principle in \cite{karatzas} e.g.).
\end{remark}

\paragraph{Numerical simulations}
In order to preserve the positivity of the process, we perform numerical simulations of the SDE \eqref{SDE_discret} using a symmetrized Euler scheme, which consists in taking the absolute value of the classical Euler scheme (see e.g \cite{Berkaoui}). More precisely, the scheme is the following: write $(N_k)_k$ for the discretization of $(N_t)_t$, where $N_k$ corresponds to the time $t_k = k \Delta t$. Then, define $N_0 = n_0$ and for $k \geq 0$, 
\begin{equation*}
N_{k+1} = \mid N_k  + b(N_k) \Delta t + \sqrt{2a \Delta t N_k} W \mid \,,
\end{equation*}
with $W \sim \mathcal{N}(0,1)$. The strong $L^1$ convergence of this scheme is proved by \cite{Berkaoui} provided that
\begin{displaymath}
\frac{\sigma^2}{8} \left(\frac{2b(0)}{\sigma^2} -1 \right)^2 > 3P \vee 4 \sigma^2\,,
\end{displaymath}
for $P \geq |r-d|$ and $\Delta t \leq \frac{1}{2P}$. This condition allows to deal with the non-Lipschitz diffusion coefficient, and rewrites, in our case,
\begin{displaymath}
\frac{a}{4} \left(\frac{c}{a} -1 \right)^2 > (3P \vee 8a) \,.
\end{displaymath}
As an example, $\frac{a}{4} \left(\frac{c}{a} -1 \right)^2 >  8a$ is equivalent to $\left(\frac{c}{a} -1 \right)^2 >32$, which is verified for $c>7a$. The numerical simulations are displayed in Figure \ref{fig:EDS}, and show two typical arrest and release situations.

\begin{figure}
\begin{center}
	\subfloat[Example of cell arrest, $c=4$.]{
	\includegraphics[width=0.45\textwidth]{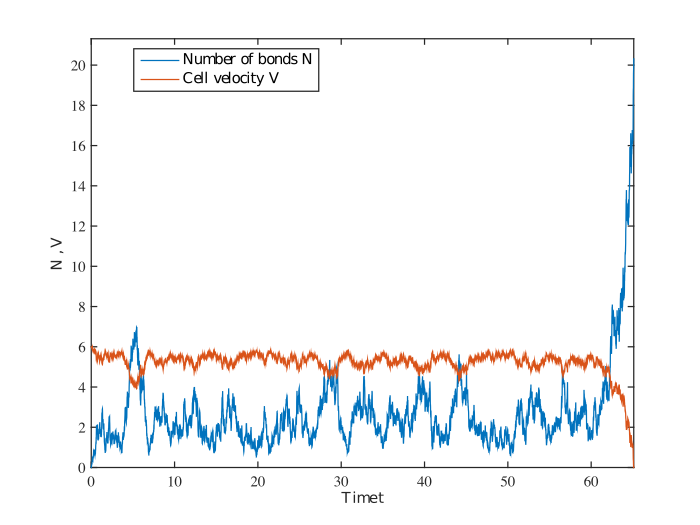} 
	}
	\subfloat[Example of cell sliding, $c=5$.]{
	\includegraphics[width=0.45\textwidth]{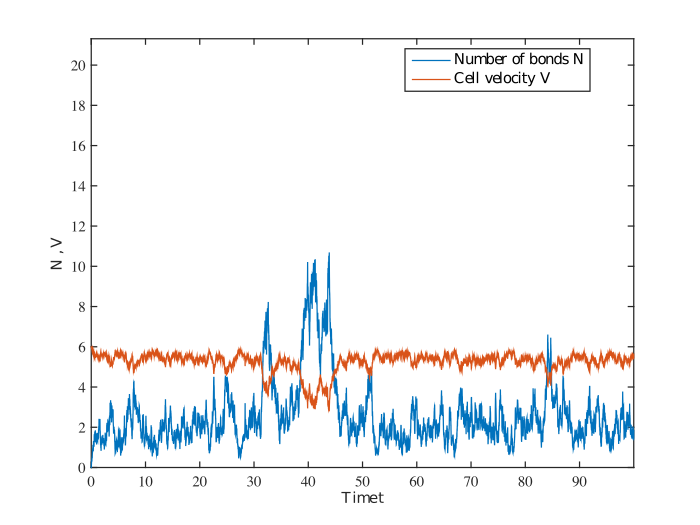} 
	}
\caption{Numerical simulations of the solution of the SDE \eqref{SDE_discret}, showing typical arrest (left) and sliding before release (right) situations. The cell velocity is displayed in red while the cell position is displayed in blue. Parameters: $(u,r,d,\alpha,a)=(20,5,4,0.1,0.55)$. }\label{fig:EDS}
\end{center}
\end{figure}


The rolling motion of individual cells has been observed to fluctuate randomly both in vivo and in vitro. The model \eqref{SDE_discret} obtained in Theorem \ref{theo_stoch1D} is therefore suited to investigate the effects of the stochastic fluctuations associated with the adhesion activity on the cell dynamics. In particular, we are interested in the expected time until the cell firmly adheres to the vessel wall. 

\subsubsection*{The case without feedback: the CIR process.}

When we assume that the cell velocity exerts no feedback on the bonds dissociation, the solution of \eqref{SDE_discret} reduces to a CIR process, see \cite{GoingJaeschke1999,StephenShreve,ikeda}:
\begin{equation}\label{eq:CIR}
\Dx N_t = (c+ (r-d)N_t) \Dx t + \sqrt{2a N_t } \Dx B_t\,,
\end{equation}
with $c>0$, $a>0$ and $r-d \in \R$. It is known that such processes arise from the diffusion limit of discrete branching processes with immigration \citep{athreya1972branching}, and show a dichotomy behaviour. Depending on the parameters, the density is almost surely either close to $0$ or large, leading to the almost sure cell arrest in our model. Simulations of this process are displayed in Figure \ref{fig:CIR}. Some general properties of the CIR process are displayed in Appendix \ref{annex:CIR}. In particular, its stationary probability density is represented in Figure \ref{fig:CIR} and shows the transition between both behaviours.

\begin{figure}[h]
\captionsetup[subfigure]{justification=centering}
  \begin{center}
    \subfloat[Subcritical case with $(c,a,r,d)=(0.5,1.5,4.45,4.5)$.][Subcritical case with \\$(c,a,r,d)=(0.5,1.5,4.45,4.5)$.]{
      \includegraphics[width=0.45\textwidth]{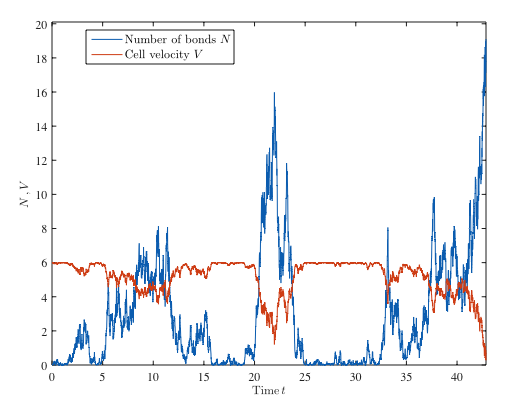}
      \label{fig:traj_cir1}
                         }
    \subfloat[Supercritical case with $(c,a,r,d)=(2,1,4,4)$.]{
      \includegraphics[width=0.45\textwidth]{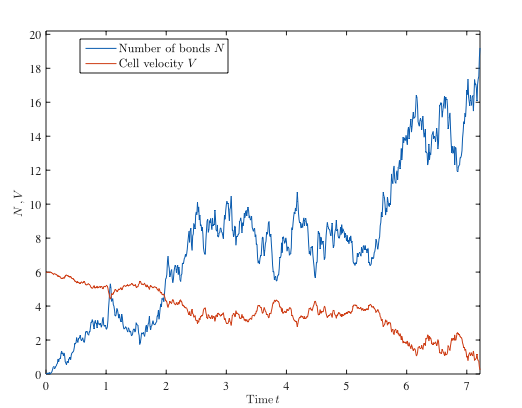}
      \label{fig:traj_cir2}
                         }  \\
    \subfloat[Subcritical case with $(c,a)=(1,2)$.]{
      \includegraphics[width=0.45\textwidth]{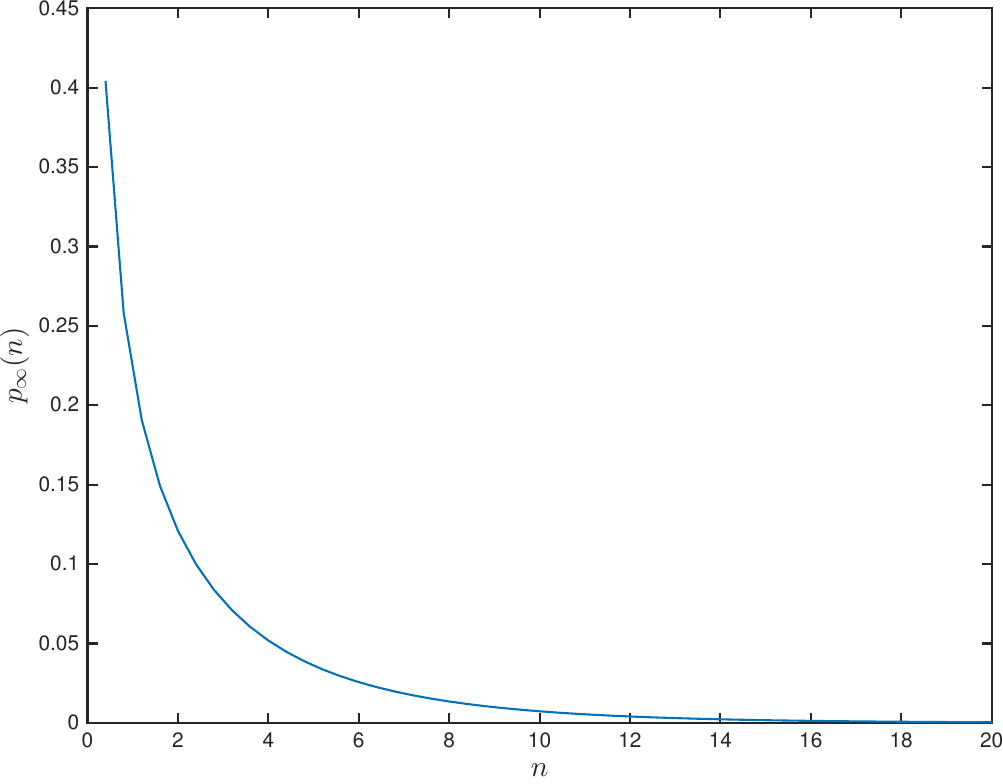}
      \label{fig:dens_cir1}
                         }
         \subfloat[Supercritical case with $(c,a)=(5,2)$.]{
      \includegraphics[width=0.45\textwidth]{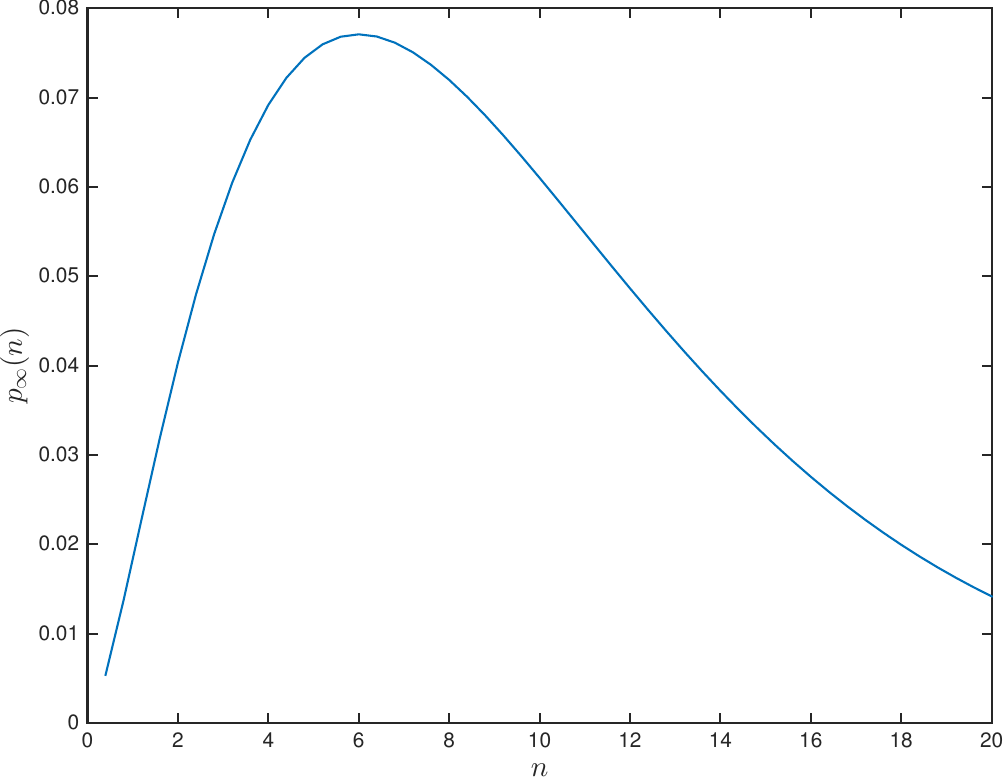}
      \label{fig:dens_cir2}
                         }
    \caption{Up: numerical simulations of the CIR process \eqref{eq:CIR}. In the subcritical case, the adhesion density almost surely reaches zero, while in the supercritical case, an adhesive interaction is almost surely sustained. Down: numerical simulations of the stationary probability density of the CIR process for $(u,\alpha,r,d)=(20,0.1,4,4.5)$.
}
    \label{fig:CIR}
  \end{center}
\end{figure}

\paragraph{Time to reach $u$:}
It is also possible to obtain information on the time needed to reach a given value. More precisely, one can use the Laplace transform of the first hitting time of any value, starting at a given point \citep{GoingJaeschke1999,leblanc1998path}. However, it is not possible to proceed to its inversion analytically. While numerical inversions procedures exist (some of them are compared by \cite{leblanc1998path}), they do not always provide satisfactory results: the integral of the output may not be equal to one, and negative values may appear. The procedure proposed by \cite{abate1992fourier} seems satisfactory in this viewpoint. \par 
In this paper, we follow the work of \cite{linetsky04,linetskyb} to numerically compute the first hitting time density using an eigenfunction decomposition, an approach used for diffusions in the litterature \citep{Davydov,ItoMacKean,MacKean}. For the CIR process, it is established in \cite{linetskya,linetskyb} that the same type of decomposition holds. We now recall the result of \cite{linetsky04} that provides a series expansion for the density $f_{T_{x\rightarrow y}}$ of the first hitting time of $y$ starting from $x$. 

\begin{proposition}{\cite{linetsky04}}
\begin{enumerate}
\item[i)] For $0<x<y \in I$, and $t>0$ we have
\begin{equation}\label{eq:time_density}
f_{T_{x\rightarrow y}}(t) = \sum_{n=1}^{+\infty} o_n \lambda_n  e^{-\lambda_n t}\,,
\end{equation}
with uniform convergence on $[t_0,+\infty)$, $t_0>0$, and $(\lambda_n)_n$ a strictly positive and strictly increasing sequence with $\lambda_n$ growing to $+\infty$ as $n$ goes to infinity. More precisely, we have that 
\begin{equation}\label{eq:lambda}
\lambda_n = (r-d) s_n\,,
\end{equation}
with $(s_n)_n$ the strictly decreasing sequence of strictly negative roots of $\Phi(\cdot;c/a;\overline{y})=0$, with $\Phi(w_1;w_2;w_3)$ denotes the Kummer confluent hypergeometric function. The sequence $(o_n)_n$ is defined by 
\begin{equation}\label{eq:on}
o_n = - \frac{\Phi(s_n;c/a;\overline{x})}{s_n \partial_s(\Phi(s_n;c/a;\overline{y}))}\,,
\end{equation}
for $\overline{y} :=- \frac{r-d}{a}y$ and $\overline{x} :=- \frac{r-d}{a}x$.
\item[ii)] Moreover, the following asymptotics hold: 
\begin{equation}\label{eq:lambdan_large}
\lambda_n \underset{n\rightarrow +\infty}{\sim} \frac{(d-r)\pi^2}{4 \overline{y}} \left(n + \frac{c}{2a} - \frac{3}{4} \right)^2 - \frac{(r-d)c}{2a}\,,
\end{equation}
as well as 
\begin{equation}\label{eq:cn_large}
\begin{split}
o_n \underset{n\rightarrow +\infty}{\sim} \frac{(-1)^{n+1} 2\pi (n+ c/(2a) - 3/4)}{\pi^2 (n + c/(2a) - 3/4)^2 - \frac{2c}{a}\overline{y} } \times  e^{\frac{1}{2}(\overline{x}-\overline{y})} \left(\frac{\overline{x}}{\overline{y}} \right)^{\frac{1}{4} - \frac{c}{2a}} \\
\cos\left(\pi \left(n + \frac{c}{2a} - \frac{3}{4} \right)  \sqrt{\frac{\overline{x}}{\overline{y}}} - \frac{\pi c}{2a} + \frac{\pi}{4}\right)\,.
\end{split}
\end{equation}
\end{enumerate}
\end{proposition}
Therefore, the proposed numerical method requires the computation of the set of negative roots of $\Phi$ to get approximations of the families $\{\lambda_n \}_n$ and $\{o_n \}_n$. The choice of the level of truncation for the approximation of \eqref{eq:time_density} may be made using the following estimate: 
\begin{displaymath}
\left| o_n \lambda_N e^{-\lambda_N t_0} \right| \underset{N\rightarrow +\infty}{\sim} A N e^{-BN^2 t_0}\,,
\end{displaymath}
for 
\begin{displaymath}
A = \frac{2a \pi}{4y} e^{\frac{\overline{x}-\overline{y}}{2}} \left(\frac{x}{y} \right)^{\frac{1}{4} - \frac{c}{2a}}\,, \quad B = \frac{a \pi^2}{4 y}\,.
\end{displaymath}
Linetsky also notices that using \eqref{eq:lambdan_large}-\eqref{eq:cn_large} instead of computing zeros of the Kummer function provides satisfactory results, in particular when $c/a$ is small. For better accuracy, one can also use the exact expressions \eqref{eq:lambda}-\eqref{eq:on} of the first term of the decomposition, then the estimates \eqref{eq:lambdan_large}- \eqref{eq:cn_large} for the others. The following numerical simulation was performed using the asymptotic expansions of $\lambda_n$ and $o_n$ only, even for $n$ small, since it is observed that this approximation does not change qualitatively the profile (see Figure \ref{fig:CIR_hit}). Obviously, in this case, the obtained function is not a probability density, and an inconsistency tends to appear near $t=0$ due to the approximation, but the overall shape is preserved. 

\begin{figure}
\centering
\includegraphics[scale=0.5]{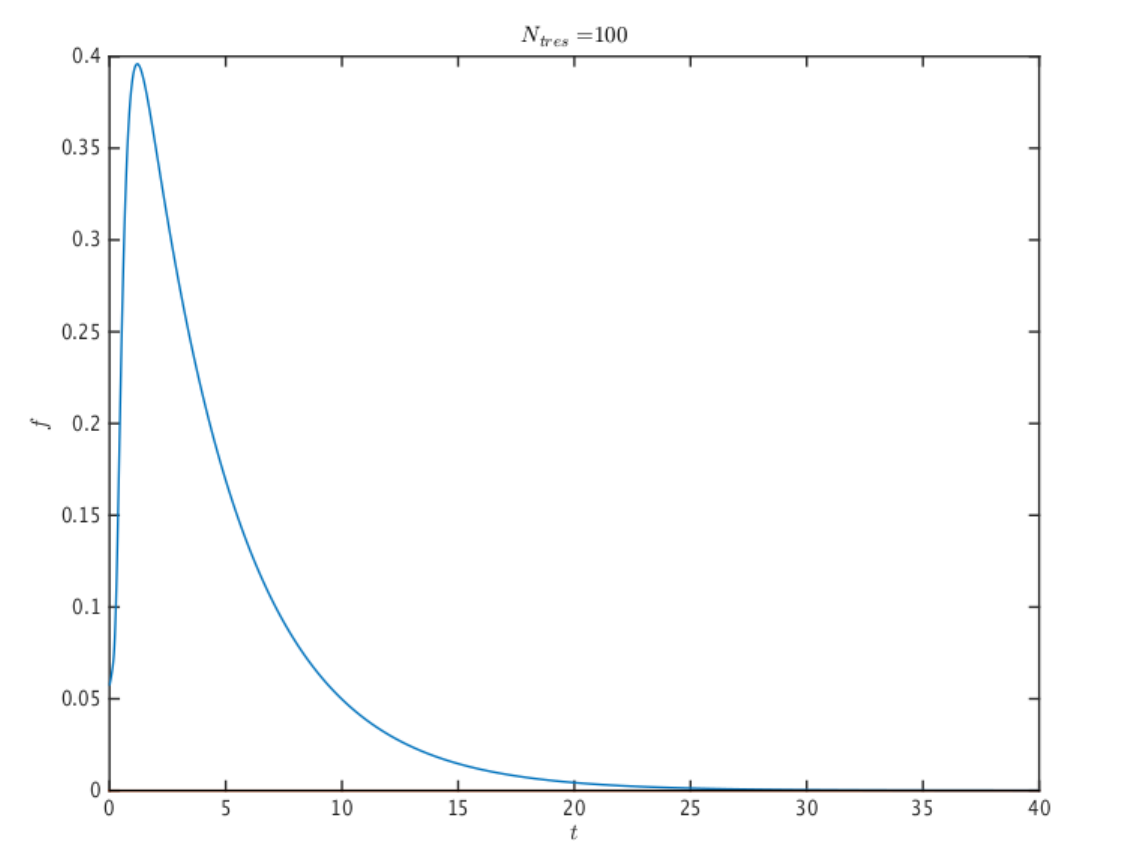}
\caption{Numerical simulation of an approximation of the asymptotic spectral decomposition of \eqref{eq:time_density}, the probability density of the first hitting time of $1$ of a CIR process starting at $0.01$. Parameters: $\Delta t=0.01$, $c=0.45$, $a=0.5$, $r=0.2$ and $d=1$. The sum is truncated at $N_{\text{tres}}=100$.}\label{fig:CIR_hit}
\end{figure}

\subsubsection*{The general case}
Let us now focus on the general case of Equation \eqref{SDE_discret}. As a first approach one can use the 1D comparison principle (see e.g \cite{revuz}) to compare the process with CIR processes. In this work, we follow another method and derive from \eqref{SDE_discret} a Fokker-Planck equation on $p(n,t) := p(n,t|n_0,t_0)$ the probability density of $(N_t)_t$ conditionally to its initial condition. We obtain the following equation: 

\begin{equation*}
 \frac{\partial p(n,t)}{\partial t} = \frac{\partial }{\partial n}  \underbrace{( -b(n) p(n,t) + \frac{1}{2}\frac{\partial }{\partial n}(\sigma^2(n) p(n,t)) )}_{\mathbf{J}(n,t)} \,,
\end{equation*}
where we recall that $b(n) = c + (r(n) - d(n))n$, while $\sigma(n) = \sqrt{2an}$ and $\mathbf{J}(n,t)$ is the associated probability current. The natural boundary conditions are the following:
\begin{eqnarray*}
\mathbf{J}(0,t) &=& 0 \, ,\\
\lim_{n\rightarrow + \infty}  p(n,t) &=&0\, ,\\
p(n,0) &=& \delta_{n=n_0}\, .
\end{eqnarray*}
We are interested in the mean time necessary for the process to reach the value $u$ starting from $n_0 \in (0,u)$, that we denote by $\tau_u(n_0)$.
This question can be adressed by considering the Fokker-Planck equation on $(0,u)$ with $0$ a reflecting and $u$ an absorbing barrier. We show the following proposition. 

\begin{proposition}
The mean time $\tau_u(n_0)$ necessary for the process to reach the value $u$ starting from $n_0 \in (0,u)$ writes
\begin{equation}\label{eq:MeanTimeStop}
\tau_u(n_0) = \frac{1}{a} \int_{n_0}^{u} \int_{0}^{y} \left( \frac{z}{y} \right)^{\frac{c}{a}} z^{-1} e^{\frac{r}{a}(z-y)} \exp\left(\frac{d}{a \alpha } e^{\alpha u} \left(e^{-\alpha z} -  e^{-\alpha y}\right) \right)
\dx z \, \dx y \, .
\end{equation}
\end{proposition}

\begin{proof}
Write $G(n_0,t)$ the probability that a particle starting at $n_0$ is still in $(0,u)$ at time $t$. Then,
\begin{displaymath}
G(n_0,t) = \int_0^{u} p(n,t|n_0,0) \Dx n = \mathbb{P}(\tau_u\geq t)\,.
\end{displaymath}
Since the dynamics is homogeneous in time, we deduce that $p(n,t|n_0,0) = p(n,0|n_0,-t)$ and $n_0 \mapsto p(n,t|n_0,0)$ satisfy the backward Fokker-Planck equation:
\begin{equation*}
\frac{\partial p(n,t|n_0,0)}{\partial t} =   b(n_0) \frac{\partial }{\partial n_0}p(n,t|n_0,0) + \frac{1}{2}\sigma^2(n_0) \frac{\partial }{\partial {n_0}^2}p(n,t|n_0,0)  \,,
\end{equation*}
and $(n_0,t)\mapsto G(n_0,t)$ satisfies
\begin{equation}\label{eq:G}
\frac{\partial G(n_0,t)}{\partial t} =   b(n_0) \frac{\partial }{\partial n_0}G(n_0,t) + \frac{1}{2}\sigma^2(n_0) \frac{\partial }{\partial {n_0}^2}G(n_0,t)  \,.
\end{equation}
The initial and boundary conditions are the following: 
\begin{eqnarray*}
G(n_0,0) &=& \int_{0}^{u} \delta_{n-n_0} \Dx n = \mathds{1}_{[0,u]}(n_0)\,,\\
\frac{\partial }{\partial n_0}G(0,t) &=& 0\, ,\\
G(u,t) &=& 0\,.
\end{eqnarray*}
Take $f\in \mathcal{C}^1(\R,\R_+)$ non-decreasing. Then, classically, $\mathbb{E}[f(\tau_u)] = \int_0^{+\infty} f'(t) \mathbb{P}(\tau_u>t) \Dx t =  \int_0^{+\infty} f'(t) G(n_0,t) \Dx t$. Hence, we get for $k>1$,
\begin{eqnarray*}
\tau_u(n_0) &=& \mathbb{E}[\tau_u] = \int_0^{+\infty}G(n_0,t) \Dx t\,,\\
\tau_u^k(n_0) &=& \mathbb{E}[\tau_u^k] = k \int_0^{+\infty} t^{k-1} G(n_0,t) \Dx t\,.
\end{eqnarray*}
Integration of \eqref{eq:G} in time leads to the following ODEs on the family $(\tau_k)_{k\geq 1}$:
\begin{equation}\label{eq:ODE_T}
\begin{cases}
&b(n_0) \tau_u'(n_0) + \frac{1}{2} \sigma^2(n_0) \tau_u''(n_0) = -1\,,\,\\
&\tau_u'(0) = 0\, , \\
&\tau_u(u)= 0\,,
\end{cases}
\end{equation}
and for $k>1$,
\begin{equation}\label{eq:ODE_Tk}
\begin{cases}
&b(n_0) {\tau^k_u}'(n_0) + \frac{1}{2} \sigma^2 (n_0) {\tau^k_u} ''(n_0) = -k \tau^{k-1}_u (n_0)\,,\\
&\partial_{n_0} \tau^k_u (0) = 0\, , \\
&\tau^k_u (u)= 0\,. 
\end{cases}
\end{equation}
By direct integration, we can solve \eqref{eq:ODE_T}, allowing to solve successively the problems \eqref{eq:ODE_Tk}. Write
\begin{equation*}
\Psi(n_0) = e^{\int_0^{n_0} \frac{2b(n')}{\sigma^2(n')} \Dx n'}\,.
\end{equation*}
Then, we have 

\begin{equation*}
\tau_u(n_0) = 2 \int_{n_0}^{u} \frac{1}{\Psi(y)} \int_{0}^{y} \frac{\Psi(z)}{\sigma^2(z)} \Dx z \, \Dx y\, .
\end{equation*}
In practice, denoting $\epsilon>0$ the lower bound in the integral instead of zero, we find that
\begin{equation*}
\Psi(n_0) = \left( \frac{n_0}{\epsilon} \right)^{\frac{c}{a}} \exp\left(\frac{r}{a}n_0 - \frac{d}{\alpha \gamma a}e^{\alpha u} (1 - e^{-\alpha \gamma n_0}) \right)\,.
\end{equation*}
Explicit computations lead to the result. 
\end{proof}

We perform numerical simulations of $\tau_u(0)$ for different values of the blood flow velocity $u$ (see Figure \ref{fig:temps_atteinte}). Below a shear-velocity treshold, the particle stops very quickly, while it becomes extremely slow above the treshold. The numerical phase plane shows a natural dependency on the adhesion creation rate $c$. The value of $a$, quantifying the noise intensity, has a direct effect on the range of the arrest time, but does not qualitatively change the phase plane.

\begin{figure}%
\captionsetup[subfigure]{justification=centering}
\begin{center}
\subfloat[Mean first stopping time $\tau_u(0)$ as a function of $u$.]{
\includegraphics[width=0.47\textwidth]{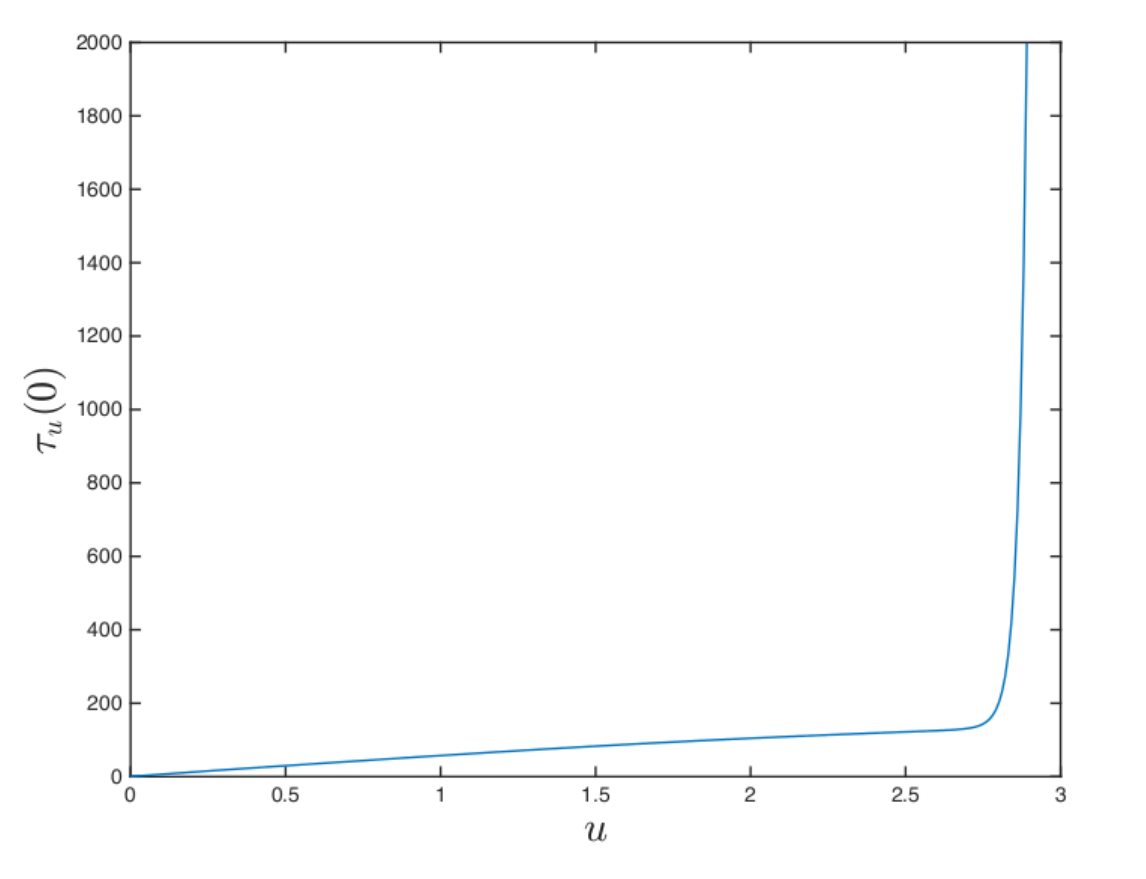} 
}
\subfloat[Phase plane with respect to $u$ and the spontaneous binding rate $c$.]{
\includegraphics[width=0.42 \textwidth]{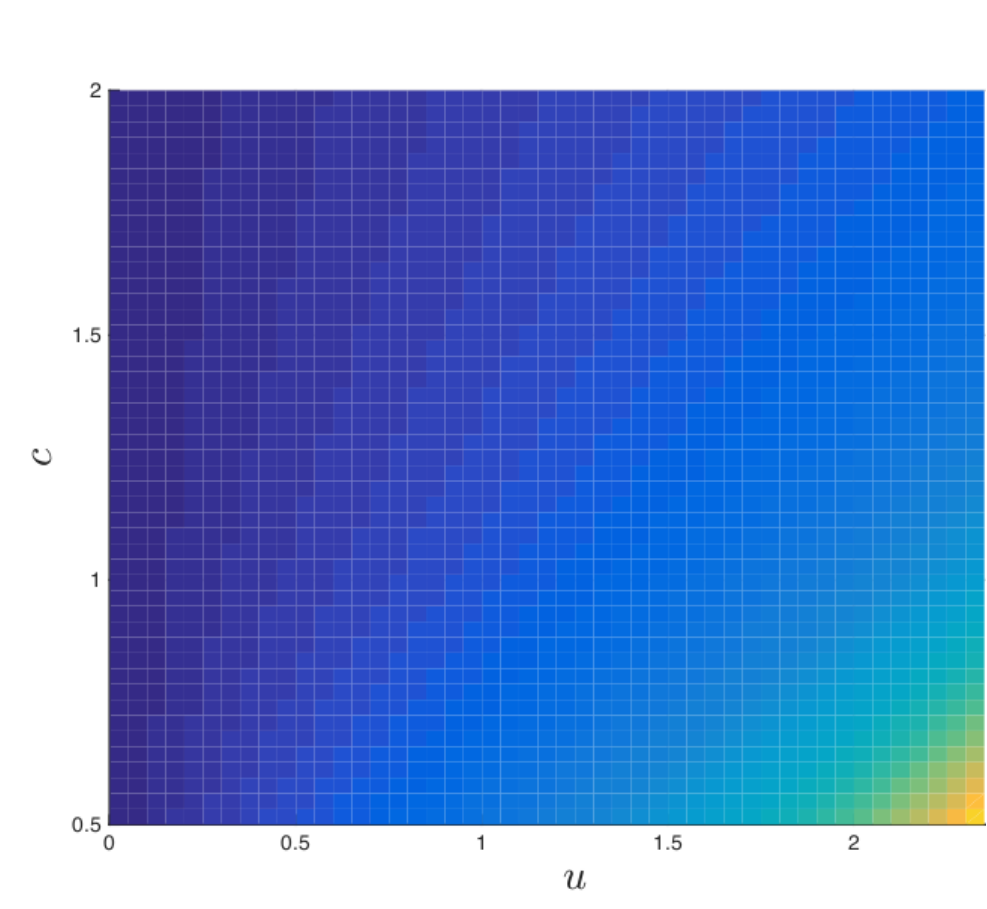}
}
\caption{Numerical simulations of the mean stopping time $\tau_u(0)$ defined by \eqref{eq:MeanTimeStop} as a function of $u$ (left), and phase plane depending on the blood velocity $u$ and the spontaneous binding rate $c$. Parameters: $\alpha=0.8$, $r=0.6$, $d=0.7$, $a=0.1$. }\label{fig:temps_atteinte}
\end{center}
\end{figure}

\section{Discussion}\label{discussion}

In this work, we have presented a discrete model of cell adhesion on a vessel wall. The model is based on the stochastic formation of both weak bonds between the cell and the wall, and stronger ones arising by self-reinforcement. This phenomenon is modelled by a stochastic birth-and-death-type process. The cell velocity exerts a feedback on the breaking rate of bonds: the faster the cell, the shorter-lived the bonds.\par 
Our purpose is to explore the events that follow the first cell contact with the wall and that may determine whether the cell stops and adheres or goes back into the blood stream. More precisely, we are interested in reproducing the bistable behaviour observed experimentally between cell rolling on the wall and its stationary adhesion. Moreover, we look for an expression of the 
cell stopping time, which is equivalently the first hitting time of a treshold value for the density of bonds. To achieve these goals, we perform some scaling limits to derive continuous deterministic and stochastic models that allow for deeper theoretical analysis. 

\subsection{Implication of our work on the understanding of the arrest and extravasation of circulating cells}

In vivo, the firm adhesion of circulating cells on the wall is a first step towards its extravasation out of the vessel, which has different consequences depending on the cell type. Extravasation of  Circulating Tumor Cells (CTCs) takes part in tumor invasion since it allows the formation of secondary tumors \citep{follain2018hemodynamic}. During the immune response, leukocytes carried by the bloodstream firmly adhere to the vessel wall at inflammed sites. Their extravasation then allows them to pursue their immune function. Let us also mention that cell adhesion to vessel walls is involved in the study of drug delivery systems. 

Overall, cell adhesion to vessel walls is a phenomenon showing major applications in biology and medecine. It is now clear that understanding the processes involved in the determination of the location of cell arrest is of prime importance. This justifies the development of new mathematical models able to explain the experimental observations. Our model focuses on the cell behaviour within the bloodstream, and is consequently no longer valid after the cell velocity has reached zero. A different model is then required to capture the fundamental processes involved in extravasation. 

\subsection{The linear mean ODE \eqref{eq:meanODE} does not agree with known observations}

Our first finding concerns the mean linear Ordinary Differential Equation \eqref{eq:meanODE} in which the cell velocity has no effect on the adhesion dynamics: ligand binding and dissociation rates are constant. As expected, in such a case, the stationary mean number of bonds is independent of the shear rate $u$. Such a result does not agree with in vitro and in vivo experiments, \citep{follain2018hemodynamic,Goetz}, which show that cells preferably stop in regions with low shear rate. This shows that a nonlinear model is required to capture this phenomenon.

\subsection{The nonlinear model qualitatively agrees with known observations: hemodynamic forces affect the adhesion dynamics of circulating cells \citep{follain2018hemodynamic}}

In the nonlinear case, we perform a renormalization procedure based on the biological observation that the orders of magnitude between the adhesion dynamics and the cell motion are different. This allows us to rigorously build continuous models, either deterministic or stochastic, of the dynamics of the density of bonds (or equivalently of the cell velocity). Then, the analysis shows that these nonlinear models predict the cell adhesion bistability as a result of the competition taking place in the cell-wall contact area between bond formation and rupture. 

\paragraph{The deterministic limiting model provides a parameter space for cell adhesion bistability.}
The deterministic limiting model writes as a nonlinear ODE, for which we give the stationary states and their stability. This allows us to outline the existence of a dichotomy behaviour in which the stability of the cell arrest is explicitly related to the balance between the blood flow velocity and the adhesion dynamics. It leads to the identification of a shear stress value that separates the cell arrest from a moving state, in agreement with experimental observations \citep{follain2018hemodynamic}. 
 
Our study also provides a parameter space associated with the existence of two stationary states, with a stable release state and an unstable sliding one. Moreover, since the model is no longer valid after the bond density reaches $u$, the cell arrest is an additional «stable» stationary state. Therefore, the model is indeed able to reproduce the bistability observed in experimental works and already reproduced in more elaborate models that take into account the types of proteins involved in the adhesion and the role of hydrodynamic forces in the cell fate. 

In \cite{finger1996adhesion}, the authors investigate the specificity of short-lived L-selectin proteins in the adhesion process. More precisely, they put to light a shear-flow treshold above which L-selectin-based binding occurs. The authors emphasize that the first step in cell adhesion may require the recruitment of these short-lived proteins to sufficiently slow the cell down to promote the formation of longer-lived types of bonds. Let us note that our model can explain this observation. Indeed, assuming that the spontaneous formation of bonds is related to L-selectins, the observed treshold is described using a bond formation rate of the form $c(u,v) = c \mathds{1}_{[0,u_*]}(u)\mathds{1}_{[\underline{v},+\infty)}(v)$, for a given $\underline{v}>0$. Reproduction then accounts for the integrins dynamics. Moreover, our assumption of velocity-dependent dissociation rate is consistent with observations of \cite{finger1996adhesion}, since L-selectin bonds form when the cell velocity is high and, consequently, so is the dissociation rate. Integrin-based bonds form for a smaller cell velocity, leading to longer lifetimes. Initially, one can assume that $n(0)=0$, $c\neq 0$, $r<<d$, and that $u> \bar{U}_{\alpha}$. Table \ref{table:limdet2} then shows that in the long-time limit, the density of bonds converges towards a stable sliding state. Then, if the cell velocity gets low enough, the cell dynamics is ruled by Table \ref{table:limdet1} with $r>d$, where the bistable behaviour between cell release and cell arrest is given. Our model is therefore able to succesfully explain the two main steps involved in the experimental observations.

More recent works have provided mechanical models taking into account the hydrodynamics forces involved in cell adhesion. In \cite{reboux2007bond}, the cell is a rolling cylinder, and interestingly the model captures both sliding and rolling situations. The model shows a bistable behaviour between cell rolling (or weak resistance to sliding) and its release in the blood flow. In \cite{efremov2011bistability,li2018rolling}, the cell is either a rectangle with a given zone for bond rupture, or a 2D circular cell in adhesive interaction with an elastic substrate. Both models predict bistability between the cell arrest and a near-release rolling state for intermediate shear rates. Furthermore, plots of the stationary velocity depending on the blood flow velocity show that, at stationary state, the cell adheres for a low shear rate and is in a rolling state above some threshold. The explanation relies on the competition between formation and rupture of bonds, that vary with the proteins involved in the adhesion. 
%

The model we build in this paper is simpler, since it does not describe the cell geometry, so that the hydrodynamic description is minimal. As a consequence, cell rolling cannot be distinguished from sliding, and we are not able to provide a plot relating the blood and cell velocities. However, remarkably, the model still reproduces a bistable behaviour, suggesting that cell adhesion dynamics is crucial for this feature.

\paragraph{The stochastic limiting model provides an expression for the cell mean arrest time and shows the effect of fluctuations on the dynamics.}

The stochastic limiting model writes as a diffusive Stochastic Differential Equation on the adhesion density, that carries a nontrivial noise term. It arises in a regime where the fluctuations in the local reinforcement of adhesions have nontrivial effects on the cell dynamics. When there is no effect of the cell velocity on its adhesion activity, the model writes as a linear CIR process, that already carries a dichotomic behaviour, and for which some first hitting time properties are known. We use a spectral method to perform numerical simulations of the corresponding probability density. Finally, in the full model describing the interaction between the cell velocity and its adhesion activity, we use a Fokker-Planck approach to derive an integral formulation of the mean arrest time of a cell. Numerical simulations of this quantity further confirm the dichotomy arising between the cell arrest and its unstopped displacement in the bloodstream, and show that the range of the arrest times is mostly sensitive to the stochastic fluctuations intensity, in agreement with experimental observations.

\subsection{Perspectives}
The modelling approach we developped gathers key components involved in the behaviour of cells circulating in blood vessels. It allows the study how the blood flow affects the cells ability to initiate and maintain an adhesive interaction with endothelial cells. These phenomena mediate either the stable cell adhesion to the wall, its rolling or sliding, or its release in the blood flow. 

Further improvements would consist in extending the modelling framework in several directions. First, the model could take into account time-dependent rates, in order to consider a variable blood-flow velocity, illustrating the effect of the heart cycle on the adhesion dynamics. Moreover, it is natural to extend the model to a 2D setting, where the vessel wall is a surface and the cell geometry is taken into account, as well as the spatial repartition of adhesive proteins at the cell surface. This situation could be handled by adding a spatial structure to the population of bonds, following the framework of \cite{fournier_microscopic_2004,champagnat_invasion_2007} and further works on measure-valued stochastic processes. This framework would allow the precise description of hydrodynamic forces exerted on the cell, so that both rolling and sliding phenomena could occur. 

\noindent Finally, further works should focus on the comparison with experimental measures to to shed light on cell arrest conditions. In particular, in the work of \cite{follain2018hemodynamic}, the authors study Circulating Tumor Cells (CTCs) in vivo, and keep track of the cell arrest sites with respect to a tuned hemodynamic flow velocity. They show that cell arrest in blood vessels occurs at sites with permissive flow profiles. Since CTCs are large, fairly rigid cells, our point-particle cells assumption is particularly relevant in this context. In addition, the density of adhesion molecules on the vessel wall is experimentally controlled, so that each feature of the model is characterized by experimental data. In this perspective, confronting the model to these data is of great interest, and could be supplemented with the theoretical study of the distribution of arrest positions.

\begin{acknowledgements}
The authors are very grateful to V.C. Tran and R. Voituriez for very helpful discussions and suggestions. They also wish to thank the reviewers for their relevant suggestions, and F. Gidel for proofreading the manuscript.
\end{acknowledgements}

\newpage
\section*{}\label{ref}
\bibliographystyle{spbasic}      
\bibliography{ligands}   

\begin{appendices}
\section{Mathematical properties of the rescaled process}\label{annex1}
In this part, we study the rescaled process $(Z_t^K)_t$ for a constant $K$, in a general framework that includes the situations studied in this paper. More precisely, we consider rates that satisfy the following hypothesis:
 
\begin{hypo}\label{hypotheses_taux_renorm1D}
For all $z \in \R_+$,
\noindent
\begin{center}
\begin{tabular}{cc}
$0\leq c^K (u) \leq Kc(u) \leq Kc $, & $0\leq r^K \leq r + K a$,\\
&\\
\multicolumn{2}{c}{$0\leq d^K (z) \leq d(z) + K a \leq d e^{\alpha u} + Ka$,}
\end{tabular}
\end{center}
\noindent
where $a>0$. In addition the disassembly rate $z \mapsto d^K (z)$ is continuous. The total formation and dissociation rates write
\begin{displaymath}
\begin{array}{cc}
\lambda^K(z) = c^{K}(u) + r^K z \text{ and } & \mu^K(z) = d^K(z) z\,.
\end{array}
\end{displaymath}
\end{hypo} 
By construction, $(Z_t^K)_{t\geq 0}$ is also a Markov process, and for $\Phi:\R_+ \rightarrow \mathbb{R}$ measurable bounded, its infinitesimal generator writes 
\begin{equation}\label{generateur_renorm1D}
L^K\Phi(Z) =  \lambda^K(KZ) \left[ \Phi(Z + \frac{1}{K} ) - \Phi(Z) \right] + \mu^K(KZ)  \left[\Phi(Z - \frac{1}{K}) - \Phi(Z) \right]\, .
\end{equation}
We show the following Proposition.
\begin{proposition}[Moment and martingale properties]\label{existence_renorm1D}
Under Hypothesis \ref{hypotheses_taux_renorm1D}, if there exists $p \geq 2$ such that $\mathbb{E}\left[(Z_0^K)^p\right] <+\infty$, then
\begin{enumerate}
\item  $\forall \; T>0,$
\begin{equation*}
\mathbb{E}\left[\sup_{t\in [0,T]} (Z^K_t)^p\right]
<+\infty,
\end{equation*}
\item for all measurable $ \Phi : \R_+ \rightarrow \mathbb{R} $ for which there exists $C$ such that $\forall z \in \R_+, \; |\Phi(z)| + |L^K\Phi(z)| \leq C (1+z^p) $, 
\begin{equation}\label{Mart_phi1D}
\Phi(Z^K_t) - \Phi(Z^K_0) - \int_{0}^t L^K\Phi(Z^K_s) \Dx s
\end{equation}
is a c\`adl\`ag $(\mathcal{F}_t)_{t\geq 0}$-martingale starting from $0$.
\item The process defined by
\begin{equation}\label{martingale_K1D}
\begin{split}
M^{K}_t = Z^K_t - Z^K_0  - \int_{0}^t  \frac{1}{K} c^K(KZ^K_s)  + \left( r^K(KZ^K_s) -d^K(KZ^K_s) \right)   Z_s^K \Dx s 
\end{split}
\end{equation}
is a c\`adl\`ag square-integrable martingale starting from $0$ and of quadratic variation
\begin{equation}\label{quadratic_K1D}
\begin{split}
\left< M^{K}\right>_t  = \frac{1}{K} \int_0^t \left\{ \frac{1}{K} c^K(KZ^K_s)  + (r^K(KZ^K_s) + d^K(KZ^K_s))Z_s^K \right\} \Dx s\, .
\end{split}
\end{equation}
\end{enumerate}
\end{proposition}

\begin{proof}
Although the proof is similar to Proposition 2.7 of \citep{Bansaye2015} we recall it here for clarity. In the following $C$ will denote a positive constant which value can change from line to line. Let $(\Omega,\mathcal{F},\mathbb{P})$ be a probability space, $Z_0$ an integer-valued random variable, and 
$M(\Dx s,\dx w)$ an independent Poisson Point Measure on $\mathbb{R}^2_+ $, of intensity measure $\Dx s \dx w$. Denote by $(\mathcal{F}_t)_{t\geq 0}$ the canonical filtration generated by these objects. Then, we define the $(\mathcal{F}_t)_{t\geq 0}$-adapted c\`adl\`ag  process $(Z^K_t)_{t\geq 0}$ as the solution of the following SDE: $\forall t\geq 0$,
\begin{equation*}
Z^K_t = Z^K_0 + \int_{0}^t \int_{\mathbb{R}_+}  \left( \mathds{1}_{0\leq w \leq \lambda^K(Z^K_{s^-})} - \mathds{1}_{\lambda(Z^K_{s^-})< w \leq \lambda^K(Z^K_{s^-}) +\mu^K(Z^K_{s^-})} \right) M(\Dx s,\dx w) \, .
\end{equation*}
This representation is classical (see e.g \cite{fournier_microscopic_2004,champagnat_invasion_2007}). The Poisson jumps related to the measure are accepted or rejected thanks to the indicator functions. The variable $w$ is then used as an acceptance parameter in order to obtain the desired rates for each event. Now, writing that $\mathbb{P}-a.s$, for a positive and measurable test function $\Phi$, 
\begin{equation*}
\begin{aligned}
\Phi(Z^K_t) &= \Phi(Z^K_0)  + \int_0^t \int_{\R_+} \left[ (\Phi(Z^K_{s^-} +1) - \Phi(Z^K_{s^-} ) \mathds{1}_{0\leq w \leq \lambda^K(Z^K_{s^-})} \right.\\
&\left. +  (\Phi(Z^K_{s^-} -1) - \Phi(Z^K_{s^-} ) \mathds{1}_{ \lambda^K(Z^K_{s^-})\leq w\leq  \lambda^K(Z^K_{s^-}) + \mu^K(Z^K_{s^-})} \right] M(\Dx s, \Dx w) \,,
\end{aligned}
\end{equation*}
so that for $\Phi(Z^K_t) = \left(Z^K_t\right)^p$ and neglecting the negative death term, we obtain
\begin{equation*}
\left(Z^K_{t}\right)^p \leq   \left(Z^K_0\right)^p + \int_{0}^{t} \int_{\mathbb{R}_+} 
\left((Z^K_{s^-}+1)^p - \left(Z^K_{s^-}\right)^p \right)
\mathds{1}_{0\leq w \leq \lambda^K(Z^K_{s^-})}  M(\Dx s, \Dx w) \,.\\
\end{equation*}
Taking expectation, and using that for $p\in \N$, $(1+z)^p - z^p \leq C(p)(1+z^{p-1})$, we obtain that 
\begin{eqnarray*}
\mathbb{E}\left[\sup_{t\in [0,T\wedge \tau_u]}\left(Z^K_{t}\right)^p\right] &\leq & \mathbb{E}[\left(Z^K_0\right)^p] + C(p) \mathbb{E}\left[\int_{0}^{T\wedge \tau_u} \left(1
+Z^K_{t^-} \right) \left(1 + \left(Z^K_{t^-}\right)^{p-1} \right) \Dx t \right] \\
&\leq &\mathbb{E}[\left(Z^K_0\right)^p] + C(p) \left(T +  \int_{0}^{T} \mathbb{E}\left[\sup_{u\in [0,t\wedge \tau_u]}\left(Z^K_{u^-}\right)^{p}\right] \Dx t\right)\,.
\end{eqnarray*}
We conclude with the Gronwall Lemma together with the assumption on the initial condition. The second item of Proposition \ref{existence_renorm1D} is a classical property of Markov processes, and \eqref{martingale_K1D} is obtained as a consequence. Finally, to obtain the quadratic variation \eqref{quadratic_K1D}, one has to write the semi-martingale decomposition of $\left(Z_t^K\right)^2$, one the one hand by applying \eqref{Mart_phi1D} to $\Phi(z)=z^2$, and on the other hand by applying \eqref{Mart_phi1D} to $\Phi(z)=z$ before using the It\={o} formula. The conclusion follows from the uniqueness of the decomposition. 
\end{proof}

\section{Proof of Theorem \ref{lim_deterministe_1D}}\label{app:det_conv}

The proof is similar to the ones of \citep{joffe_weak_1986,ethier2009markov}, and is based on a compactness-uniqueness argument. First, note that since the rates under study have $K$-independent upper bounds, reproducing the proof detailed in Appendix \ref{annex1} lead to
\begin{equation*}
\forall \; T>0,\; \sup_{K>0} \mathbb{E}\left[\sup_{t\in [0,T \wedge \tau_u]}(Z^K_t)^p\right] <+\infty\,.
\end{equation*}
The uniform tightness of the sequence of laws $(Q^K)_K$ of $(Z^K)_K$ follows. Then, from the Prokhorov theorem we deduce the relative compactness of the family of laws $(Q^{K})_K$ on $\mathbb{D}([0,T], \R_+)$. Consider now a convergent subsequence of limit $Q$, and a corresponding sequence of processes converging in distribution to some $n \in \mathbb{D}([0,T], \R_+)$ of law $Q$. Our aim is thus to identify this limit. Firstly, since the jumps of $(Z_t^{K})_t$ are of the form $1/K$, we know that any process of law $Q$ is almost surely strongly continuous. Now, for $t\leq T\wedge \tau_u$, denote 
\begin{eqnarray*}
\Psi_t (n)& :=& n_t - n_0 - 
\int_0^t  c(u)   +  (r- d(n_s)) n_s \Dx s \, .
\end{eqnarray*}
We can easily prove that for all $t\leq T$, $\mathbb{E}_Q\left[ \left| \Psi_t(n) \right|\right]=0$. Finally, the convergence follows from the uniqueness of solutions to \eqref{limite1_eq1D} in $\mathcal{C}([0,T],\R_+)$, which comes from the Lipschitz-continuity of the disassembly rate.

\section{Proof of Theorem \ref{theo_stoch1D}}\label{annex:stoch}
\subsection{Uniform estimates}
We first prove two propositions that provide uniform estimates on the process. They will be used to show the tightness of any sequence of laws associated with $(Z_t^K)_K$. 

\begin{proposition}\label{moments_stoch1D}
Under the assumptions of Theorem \ref{theo_stoch1D}, if $$\sup_{K>0} \mathbb{E}[(Z_0^K)^2] < + \infty,$$ then for $T< +\infty$, 
\begin{equation*}
\sup_K \sup_{t\in [0,T]} \mathbb{E}[(Z_t^K)^2] <+\infty.
\end{equation*}
\end{proposition}

\begin{proof}
First, the infinitesimal generator \eqref{generateur_renorm1D} associated with $\Phi(x) = x^2 $ writes
\begin{eqnarray*}
L^K\Phi(Z^K_s)&\leq & \left(c +r Z^K_s\right) \left(2 Z^K_s + \frac{1}{K}\right)  + 2 a Z^K_s \leq  C \left(1 + Z^K_s + (Z^K_s)^2 \right)\,.
\end{eqnarray*}
Therefore, we obtain that
\begin{eqnarray*}
\mathbb{E}[(Z_t^K)^2] &\leq & \mathbb{E}[(Z_0^K)^2] +  C \left( t + \int_0^t \E[Z^K_s]  +   \E[(Z^K_s)^2] \Dx s\right)\,,
\end{eqnarray*} 
and since the Gronwall Lemma yields $\E[Z^K_s]\leq C(1+\mathbb{E}[(Z_t^K)^2] )$, we deduce that there exists $C(T)$ such that $\mathbb{E}[(X_t^K)^2] \leq C(T)$, and the Proposition is proved. 
\end{proof}

\begin{proposition}\label{moments_stoch21D}
Under the assumptions of Theorem \ref{theo_stoch1D}, if $\sup_{K>0} \mathbb{E}[(Z_0^K)^2] < + \infty$, then for $T< +\infty$, 
\begin{equation*}
\sup_K \E\left[ \sup_{t\in [0,T]} Z_t^K\right] <+\infty\,.
\end{equation*}
\end{proposition}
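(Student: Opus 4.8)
The plan is to bound $\sup_{t\in[0,T]} X_t^K$ by a sum of three manageable pieces using the semimartingale decomposition \eqref{martingale_det1D}, namely
\begin{displaymath}
\sup_{t\in[0,T]} X_t^K \leq X_0^K + \int_0^T \Big( c(X_s^K) + \big(r(X_s^K)+d(X_s^K)\big) X_s^K\Big)\Dx s + \sup_{t\in[0,T]} |M_t^K|\,,
\end{displaymath}
where I have used $|r-d|\le r+d$ to make the drift term monotone. The first term is controlled uniformly in $K$ by the hypothesis $\sup_K \mathbb{E}[(X_0^K)^2]<+\infty$. For the integral term, Hypothesis \ref{hypothèses_taux_renorm1D} gives $c(X_s^K)\le \overline C$ and $r(X_s^K)+d(X_s^K)\le$ (a constant, since the $K^\eta a$ terms cancel only in $r-d$ but appear additively here) — wait, here one must be careful: with $\eta=1$ the rates $r^K,d^K$ themselves are $O(K)$, so one cannot bound $(r^K+d^K)X_s^K$ by a $K$-independent constant times $X_s^K$.

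The correct route, therefore, is not to expand $|M^K|$ crudely but to go back through the Poisson representation. Writing $X_t^K = N_t^K/K$ with $N^K$ solving \eqref{processus1Ddiscret} (with the $K$-dependent rates), the jumps of $X^K$ are $\pm 1/K$, so
\begin{displaymath}
\sup_{t\in[0,T]} X_t^K \leq X_0^K + \frac1K \int_0^T\!\!\int_{\mathbb{R}_+} \mathds{1}_{0\le w\le \lambda^K(N_{s^-}^K)} \, M(\Dx s,\dx w)\,,
\end{displaymath}
since only up-jumps can increase the running maximum. Taking expectations and using that the compensator of $M$ is $\Dx s\,\dx w$,
\begin{displaymath}
\mathbb{E}\Big[\sup_{t\in[0,T]} X_t^K\Big] \leq \mathbb{E}[X_0^K] + \frac1K \int_0^T \mathbb{E}\big[\lambda^K(N_s^K)\big]\Dx s = \mathbb{E}[X_0^K] + \int_0^T \mathbb{E}\Big[\tfrac1K c^K(KX_s^K) + r^K(KX_s^K)X_s^K\Big]\Dx s\,.
\end{displaymath}
Now Hypothesis \ref{hypothèses_taux_renorm1D} gives $\frac1K c^K \le \overline C$ and $r^K(KX_s^K) \le \overline R + K a$; the dangerous factor $K a$ is multiplied by $X_s^K$, and crucially $\mathbb{E}[X_s^K]$ is itself $O(1/\text{something})$? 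No — so this still looks like it blows up. The resolution must use Proposition \ref{moments_stoch1D}: in fact the $K^\eta a$ contributions in $\lambda^K$ and $\mu^K$ nearly cancel in the drift, and what one should actually write is $\mathbb{E}[\sup_t X_t^K]\le \mathbb{E}[X_0^K] + \mathbb{E}[\int_0^T(\lambda^K-\mu^K)(N_s^K)\Dx s] + \mathbb{E}[\sup_t |\widetilde M_t^K|]$ where $\widetilde M^K$ is the martingale part; the drift $\lambda^K-\mu^K = c^K(KX^K) + (r^K-d^K)(KX^K)X^K = K c(X^K) \cdot\frac1K + (r-d)X^K$ has no $K^\eta$ term, hence is bounded by $C(1+X_s^K)$, and Proposition \ref{moments_stoch1D} controls $\int_0^T \mathbb{E}[X_s^K]\Dx s$ uniformly in $K$.

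The remaining and genuinely delicate step is the martingale term $\mathbb{E}[\sup_{t\in[0,T]}|M_t^K|]$: by the Burkholder–Davis–Gundy inequality this is bounded by $C\,\mathbb{E}\big[\langle M^K\rangle_T^{1/2}\big]$, and from \eqref{quadratic_det1D},
\begin{displaymath}
\langle M^K\rangle_T = \frac1K \int_0^T\Big\{ c(X_s^K) + \big(r(X_s^K)+d(X_s^K)+2K^\eta a\big)X_s^K\Big\}\Dx s\,.
\end{displaymath}
With $\eta=1$ the factor $\frac1K \cdot 2Ka = 2a$ is $K$-independent, so $\langle M^K\rangle_T \le C\int_0^T(1+X_s^K)\Dx s$, and then $\mathbb{E}[\langle M^K\rangle_T^{1/2}] \le \big(C\,\mathbb{E}[\int_0^T(1+X_s^K)\Dx s]\big)^{1/2}$ by Jensen, which Proposition \ref{moments_stoch1D} bounds uniformly in $K$. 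For $0<\eta<1$ the factor $\frac1K \cdot 2K^\eta a = 2aK^{\eta-1}\to 0$, so the bound is even better. Putting the three pieces together and invoking Proposition \ref{moments_stoch1D} one more time for the drift integral yields the claimed $K$-uniform bound. I expect the main obstacle to be precisely the bookkeeping around the $K^\eta a$ terms — making sure they cancel in the drift (leaving $r-d$) but that in the quadratic variation the surviving $\frac1K K^\eta a$ is harmless — together with a clean application of BDG; everything else is routine Gronwall-type estimation building on the already-established second moment bound.
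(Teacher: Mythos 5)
Your final argument is essentially the paper's own proof: decompose $X^K$ via \eqref{martingale_K1D} so the $K^{\eta}a$ terms cancel in the drift $c+(r-d)X^K$, control the drift integral through the second-moment bound of Proposition \ref{moments_stoch1D}, and handle $\E[\sup_{t\le T}|M^K_t|]$ by Doob/BDG together with \eqref{quadratic_K1D}, where the surviving factor $\frac1K\cdot 2K^{\eta}a\le 2a$ is harmless. The false starts you flag are correctly discarded, and apart from minor bookkeeping (bounding $\sup_t\int_0^t$ of the signed drift by the integral of its absolute value, as the paper does by dropping the death term), the proposal is correct and follows the same route.
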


\begin{proof}
We first deduce from \eqref{martingale_K1D} that 
\begin{equation}\label{eq:prop2det}
\sup_{t\in [0,T]} Z^K_t \leq  \sup_{t\in [0,T]} |M^{K}_t| + Z^K_0  +  cT + r\int_{0}^t Z^K_s \Dx s\,.
\end{equation}
We want to take the expectation in this inequality. For that purpose, we first use the Burkholder-Davis-Gundy inequality to write that
\begin{displaymath}
\E\left[\sup_{t\in [0,T]} |M^{K}_t|\right]^2 \leq \E\left[\sup_{t\in [0,T]} |M^{K}_t|^2\right] \leq 4 \E\left[|M^{K}_{T}|^2\right] = 4 \E\left[\left<M^{K}\right>_T\right]\,.
\end{displaymath}
Now, since $\E[Z^K_0]<+ \infty$, we obtain from \eqref{eq:prop2det} that
\begin{eqnarray*}
\E\left[\sup_{t\in [0,T]} Z^K_t\right] &\leq & 2\E\left[\left<M^{K}\right>_{T}\right]^{1/2} + C(T) + r\E\left[\int_{0}^t Z^K_s \Dx s\right].
\end{eqnarray*}
We then use \eqref{quadratic_K1D} to get 
\begin{eqnarray*}
\E[\left< M^{K}\right>_{T}]  &\leq & cT+  (r + de^{\alpha u}+ 2 a)  \int_0^t \E[Z^K_s] \Dx s \leq  C(T)
\end{eqnarray*}
thanks to Proposition \ref{moments_stoch1D}. We conclude using the Gronwall lemma.
\end{proof}

\subsection{Proof of convergence}
The proof follows the same outline as in the deterministic case. First, we prove similarly that the sequence of laws $(Q^K)_K$ of the processes $(Z^K)_K$ is uniformly tight in $\mathcal{L}(\mathbb{D}([0,T],\R_+))$. Indeed, denote $(A_t^{K})_{t\geq 0}$ the finite variation process associated to $(Z_t^K)_t$. The Aldous and Rebolledo criterion \citep{1989aldous} states that we need to prove that for all $T>0$ the following inequalities hold true:
\begin{enumerate}
\item[a)] $ \displaystyle \sup_{K >0}\mathbb{E}\left[ \sup_{t\in [0,T]} \left| Z_t^K \right|\right] <+ \infty$.
\item[b)] $\forall \varepsilon >0, \; \forall \eta >0,\; \exists \delta >0,\; K_0 \in \mathbb{N}^*$ such that  for all sequence $(\sigma_K,\tau_K)_K$  of stopping times with $\sigma_K \leq \tau_K \leq T,$
\begin{enumerate}
\item[(i)] \begin{displaymath}
\sup_{K\geq K_0} \mathbb{P}\left(\left| <M^{K}>_{\tau_K} - <M^{K}>_{\sigma_K} \right|\geq \eta,\; \tau_K \leq \sigma_K + \delta \right) \leq \varepsilon,
\end{displaymath}
\item[(ii)] \begin{displaymath}
\sup_{K\geq K_0} \mathbb{P}\left(\left| A^{K}_{\tau_K} - A^{K}_{\sigma_K} \right|\geq \eta,\; \tau_K \leq \sigma_K + \delta \right) \leq \varepsilon\,.
\end{displaymath}
\end{enumerate}
\end{enumerate}
These estimates can easily by obtained by direct computations, using Proposition \ref{moments_stoch21D} and the Markov inequality. Now, we aim at identifying the limiting values. For $Y=(Y_t)_{t\geq 0} \in \mathbb{D}([0,T],\R_+)$, let us define
\begin{equation}\label{proc_limite_1D}
\widetilde{M}_t(Y) := Y_t  - Y_0 -  \int_{0}^t c(u)  +  \left( r - d(Y_s) \right)Y_s  \Dx s\,.
\end{equation}
We need to show that $\widetilde{M}_t(N)$ is a twice-integrable continuous martingale with quadratic variation process defined by 
\begin{equation}\label{proc_quad_1D}
\left< \widetilde{M} \right>_t = 2 a \int_0^t  Y_s \Dx s.
\end{equation}
First, we show that $\widetilde{M}(N)$ is a martingale. Take $0 \leq s_1 <...<s_n<s<t$, and $\Phi_1,...,\Phi_n$ continuous bounded functions from $\R$ to $\mathbb{R}$. Define $\Psi$ on $ \mathbb{D}([0,T], \R)$ by
\begin{equation*}
\Psi(Y) = \Phi_1(Y_{s_1})...\Phi_n(Y_{s_n})\left[ Y_t - Y_s - \int_s^t  c(u) +  \left( r - d(Y_s) \right)Y_s   \Dx u \right].
\end{equation*}
As for the proof of Theorem \ref{lim_deterministe_1D}, we can show that $\E[\Psi(N)] = 0$. The main novelty in this proof consists in showing that the bracket of $\widetilde{M}$ is given by \eqref{proc_quad_1D}. We proceed in two steps. 
\begin{enumerate}
\item First, we consider $K$-dependent semimartingale obtained from \eqref{Mart_phi1D} with $\Phi(Z^K)~=~(Z^K)^2$, that is related to the infinitesimal generator given by
\begin{equation*}
\begin{aligned}
L^K\Phi(Z^K_s)& = 2 Z^K_s \left(c(u) + (r - d(Z^K_s)) Z^K_s \right)   \\
& + \frac{1}{K} \left(c(u) + (r + d(Z^K_s) + 2K a ) Z^K_s \right) \,.
\end{aligned}
\end{equation*}
We can show that, at the limit, we obtain the following martingale:
\begin{equation*}
\stackrel{\sim}{N}_t = (N_t)^2 - (N_0)^2 - \int_0^t  2 N_s  \left(c(u)  + (r - d(N_s)) N_s \right)  + 2 a  N_s  \Dx s \,.
\end{equation*}
\item Then, the It\=o  formula applied to \eqref{proc_limite_1D} show that
\begin{equation*}
N_t^2 - N_0^2 - <\widetilde{M}>_t -\int_0^t  2N_s\left(c(u) + (r - d(N_s)) N_s \right) \Dx s
\end{equation*}
is a martingale. We conclude by the uniqueness of the semimartingale decomposition of $N_t^2$.
\end{enumerate}
The equivalence between the martingale problem \eqref{proc_limite_1D}-\eqref{proc_quad_1D}
and the SDE \eqref{SDE_discret} follows from the classical martingale identification
\begin{equation*}
\widetilde{M}_t = \int_0^t \sqrt{2 a  N_s } \Dx B_s\,,
\end{equation*}
see for example \cite{karatzas}. Finally, the pathwise uniqueness of the solution to \eqref{SDE_discret} is classical in dimension 1 since the drift term is Lipschitz-continuous and the diffusion coefficient is $1/2$-H\"{o}lder (see e.g \cite{ikeda}).

\begin{remark}
The solution is strong and has the strong Markov property.
\end{remark}

\section{The CIR process}\label{annex:CIR}
In this part, we give some classical results about the CIR process. For more details we refer to \cite{GoingJaeschke1999,StephenShreve}.
\subsection*{Reaching zero}
First, it is known that for$c>0$ and a positive initial state, $\{N=0\}$ is a reflective barrier for the process \eqref{eq:CIR}. The properties of the probability $\P^0$ to hit zero are gathered in the following table.
\begin{center}
\begin{tabular}{c|c|c}
\multicolumn{2}{c|}{$c<a$} & \multirow{2}{*}{$c \geq a$} \\
$r-d \leq 0$ & $r-d >0$ & \\
\hline
&&\\
$\P^0 =1$ & $\P^0 \in (0,1)$ & $\P^0 =0$
\end{tabular}
\end{center}
These results can be intuitively understood when $r-d\leq 0$, using the correspondance between CIR processes and Orstein-Uhlenbeck processes. Indeed, consider $D$ such processes $(X^1,\,\cdots ,\, X^D)$ such that $\forall 1\leq i \leq D$, 
\begin{displaymath}
\Dx X^i_t = -\frac{1}{2} \beta X^i_t \Dx t + \frac{1}{2} \sigma \Dx B^i_t\,,
\end{displaymath}
with $(B^i)_i$ independent Brownian motions and $\beta >0$. Each process follows a stochastic dynamics that is drifted to zero. The It\=o formula allows the derivation of the SDE satisfied by $R = (X^1)^2 + \cdots + (X^D)^2$, the squared euclidean norm of $(X^1,\,\cdots ,\, X^D)$. We obtain
 \begin{displaymath}
\Dx R_t = \left(\frac{\sigma^2 D}{4} - \beta R_t \right)\Dx t  + \sigma \sqrt{R(t)} \Dx B_t\,,
\end{displaymath}
with $B$ a Brownian motion. The CIR Equation \eqref{eq:CIR} is therefore obtained for $r-d =-\beta \leq 0$, $\sigma^2=2a$ and $D = 4c / \sigma^2 >0$. As a consequence, when $4c / \sigma^2$ is an integer, the CIR process writes as the squared norm of $D$ Ornstein-Uhlenbeck processes. Therefore, using the properties of the Brownian motin, the CIR process almost surely hits zero infinitely many times when $D=1$, while it has a null probability of reaching zero when $D\geq 2$. As a remark, note also that the CIR process can be rigorously related to the Squared Radial Ornstein-Uhlenbeck process.
\subsection*{Distribution}
When $\delta := \frac{2c}{a}\in \N$ and $r-d<0$, the CIR process writes as the squared norm of a $\delta$-dimensional Ornstein-Uhlenbeck process. Consequently, we have that
 $$N_t|n_0 = \frac{(1-e^{(r-d)t})2a }{2(d-r)} Y_t,$$ 
 where $Y_t$ follows a non-central Chi-square distribution with $\delta$ degrees of freedom, and a non-centrality parameter $\xi_t = n_0 \frac{2(d-r)}{(1-e^{(r-d)t})a}e^{(r-d)t}$. Moreover, the mean solution and its variance can be computed from \eqref{eq:CIR}, leading to
\begin{eqnarray*}
\E[N_t| n_0] &=& n_0e^{(r-d)t} + \frac{c}{d-r} \left(1 - e^{(r-d)t} \right)\,,\\
Var(N_t|n_0) &=& n_0 \frac{2a}{d-r} \left(e^{(r-d)t} - e^{2(r-d)t} \right) + \frac{ac}{ (d-r)^2} \left(1 - e^{(r-d)t} \right)^2\, .
\end{eqnarray*}
For $n\geq 0$, $n_0>0$ and $\kappa_t = \frac{d-r}{(1-e^{(r-d)t})a}$, the probability density $p_{n_0}$ of the CIR process writes
\begin{equation*}
p_{n_0}(n;k,\xi_t) = \kappa_t \left(\frac{n}{n_0e^{(r-d)t}} \right)^{\frac{c}{a}-1} e^{-\kappa_t \left[n_0 e^{(r-d)t} + n \right]}
I_{\frac{c}{a}-1}\left(2\kappa_t \sqrt{n_0 n e^{(r-d)t}}\right)\,,
\end{equation*}
where 
\begin{equation*}
I_\alpha (x) := \Sigma _{m=0}^{\infty}\frac{1}{m! \Gamma (m+\alpha +1)}\left( \frac{x}{2}\right) ^{2m+\alpha }
\end{equation*}
is the modified Bessel function of the first kind, with $\Gamma $ the Gamma function defined by $\Gamma(t)= \int_0^\infty x^{t-1} e^{-x} \Dx x$. A stationary distribution exists if and only if $r-d<0$ \citep{ikeda}. In this case, following a Fokker-Planck approach, one can check that the stationary density writes 
\begin{equation*}
\begin{aligned}
p_{\infty}(n) &= \left(\frac{d-r}{a} \right)^{\frac{c}{a}} \frac{1}{\Gamma\left(\frac{c}{a}\right)} n^{\frac{c}{a} -1} e^{\frac{r-d}{a}n}\,,\\
&=  \mathcal{N} e^{-\phi(n)}\,,
\end{aligned}
\end{equation*}
with $\phi(n) = \left(1-\frac{c}{a}\right)\ln(n) + \frac{d-r}{a}n$ the corresponding potential, and $\mathcal{N}$ a normalization constant. 
\end{appendices}

\end{document}